\title{Superconvergence of a Galerkin FEM for Higher-Order Elements in Convection-Diffusion Problems}
\author{Sebastian Franz\footnote{
           Institut f\"ur Numerische Mathematik, Technische Universit\"at Dresden,
           01062 Dresden, Germany.
           \mbox{e-mail}: \{sebastian.franz, hans-goerg.roos\}@tu-dresden.de}\and
        Hans-G\"org Roos\footnotemark[1]
        }
\date{\today}
\newcommand{\ord}[1]{\mathcal{O}\left(#1\right)}
\DeclareMathOperator{\meas}{meas}
\newcommand{\laplace}{\Delta}
\newcommand{\grad}{\nabla}
\newcommand{\eps}{\varepsilon}
\newcommand{\norm}[2]{\|{#1}\|_{#2}}
\newcommand{\bignorm}[2]{\left\| {#1} \right\|  _{#2}}
\newcommand{\tnorm}[1]{\left|\!\!\;\left|\!\!\;\left| {#1}
                       \right|\!\!\;\right|\!\!\;\right|}
\newcommand{\enorm}[1]{\tnorm{#1}_\eps}
\newcommand{\half}{\frac{1}{2}}
\newcommand{\PS}{\mathcal{P}}
\newcommand{\QS}{\mathcal{Q}}
\newcounter{tmp}
\newcommand{\makeballnumber}[1]{\setcounter{tmp}{\theenumi}%
\setcounter{enumi}{#1}%
\leavevmode \csname beamer@@tmpl@enumerate item\endcsname%
\setcounter{enumi}{\thetmp}}
\newcommand{\makeball}{\leavevmode \csname beamer@@tmpl@itemize item\endcsname}
\numberwithin{equation}{section}
\DeclareMathAlphabet{\mathcal}{OMS}{cmsy}{m}{n}
\theoremstyle{plain}
\newtheorem{thm}{Theorem}[section]
\newtheorem{lem}[thm]{Lemma}
\newtheorem{ass}[thm]{Assumption}
\newtheorem{cor}[thm]{Corollary}
\newtheorem{rem}[thm]{Remark}
\begin{document}
  \thispagestyle{empty}

  \mbox{}\hfill Als Manuskript gedruckt

  \vfill

  \begin{center}
    Technische Universit\"at Dresden \\
    Herausgeber: Der Rektor

    \vfill
    \vfill

    \begin{large}
       \makeatletter
       \textbf{\@title}
       \makeatother
    \end{large}
    \bigskip

    Sebastian Franz und Hans-Görg Roos

    \bigskip

     MATH-NM-03-2013

    \medskip

     \today

  \end{center}

  \vspace{5cm}

  \mbox{}

  \newpage
  \thispagestyle{empty}
  \mbox{}
  \newpage

 \pagestyle{fancy}
  \maketitle
  \begin{abstract}
     In this paper we present a first supercloseness analysis for
     higher-order Galerkin FEM applied to a singularly perturbed
     convection-diffusion problem. Using a solution decomposition
     and a special representation of our finite element space
     we are able to prove a supercloseness property of $p+1/4$
     in the energy norm where the polynomial order $p\geq 3$ is odd.
  \end{abstract}

  \textit{AMS subject classification (2000):}
   65N12, 65N30, 65N50.

  \textit{Key words:} singular perturbation,
%                       boundary layers,
                      layer-adapted meshes,
                      superconvergence,
                      postprocessing

%----------------------------------------------------------------------
 \section{Introduction}
%----------------------------------------------------------------------
  Consider the convection dominated convection-diffusion problem
  \begin{subequations}\label{eq:Lu}
  \begin{align}
   -\eps\laplace u-(b\cdot\grad)u+cu&=f,\quad\mbox{in }\Omega=(0,1)^2\\
    u&=0,\hspace*{0.4cm}\mbox{on }\partial\Omega
  \end{align}
  \end{subequations}
  where $c\in L_\infty(\Omega)$, $b\in W_\infty^1(\Omega)$,
  $f\in L_2(\Omega)$ and $0<\eps\ll1$,
  assuming
  \begin{gather}\label{eq:usual}
    c+\half b_x\geq\gamma>0.
  \end{gather}
  For a problem with exponential layers, i.e. in the case $b_1(x,y)\geq\beta_1>0$, $b_2(x,y)\geq\beta_2>0$,
  we have for linear or bilinear elements in the so called energy norm 
  \[
   \enorm{v}^2:=\eps\norm{\grad v}{0}^2+\norm{v}{0}^2
  \]
  where $\norm{\cdot}{0}$ denotes the usual $L_2$-norm, on a
  Shishkin mesh (for the exact definition see Section~\ref{sec:mmsd})
  \[
   \enorm{u-u^N}\lesssim N^{-1}\ln N.
  \]
  We use the notation $a\lesssim b$, if a generic constant $C$ independent of $\eps$ and $N$
  exists with $a\leq C b$.

  However, for bilinear elements Zhang~\cite{Zhang03} and Linß~\cite{Linss00} observed a supercloseness
  property: the difference between the Galerkin solution $u^N$ and the standard piecewise
  bilinear interpolant $u^I$ of the exact solution $u$ satisfies
  \[
   \enorm{u^I-u^N}\lesssim(N^{-1}\ln N)^{2}.
  \]

  Supercloseness is a very important property. It allows optimal error estimates in $L_2$
  (Nitsche's trick cannot be applied), improved error estimates in $L_\infty$ inside the layer regions
  and recovery procedures for the gradient, important in a posteriori error estimation.
  
  In the last ten years supercloseness for bilinear elements was also proved for problems with
  characteristic layers~\cite{FrL06}, for S-type meshes~\cite{Linss00}, for Bakhvalov meshes~\cite{RoosSchopf10}
  and for several stabilisation methods, including streamline diffusion FEM (SDFEM),
  continuous interior penalty FEM (CIPFEM),
  local projection stabilisation FEM (LPSFEM)
  and discontinuous Galerkin (see e.g. \cite{FrLR06,Fr07,FrM10, RoosZar07,ST03,FrLRS08,Zarin09}).
  Recently, even corner singularities were included in the analysis~\cite{LR13}.
  
  For $\QS_p$-elements with $p\geq2$ the situation is very different.
  Using the so-called vertex-edge-cell interpolant $\pi u$ \cite{Lin91,GR86} instead of
  the standard Lagrange-interpolant with equidistant interpolation points,
  Stynes and Tobiska \cite{ST08} proved for SDFEM (but not for the Galerkin FEM)
  \[
%    \enorm{\pi u-\tilde u^N}\lesssim(N^{-1}\ln N)^{p+1/2}\ln N,
   \enorm{\pi u-\tilde u^N}\lesssim N^{-(p+1/2)},
  \]
  where $\tilde u^N$ denotes the SDFEM solution. It is not clear whether this estimate is optimal.
  The numerical results of \cite{Fr12, Fr13_1} indicate for the Galerkin FEM and
  $p\geq 3$ a supercloseness property of order $p+1$ for two different interpolation
  operators. One of them is the vertex-edge-cell interpolator $\pi u$, the other
  one is the Gauss-Lobatto interpolation operator $I^Nu$.
  For SDFEM, the order $p+1$ is observed numerically for all $p\geq 2$.
  
  In the present paper we study the Galerkin FEM for odd $p$.
  We shall prove some supercloseness properties, but the achieved order is probably not optimal.
  
  The paper is organised as follows. In Section~\ref{sec:mmsd} we provide 
  descriptions of the underlying mesh, the numerical method and a solution 
  decomposition. The main part is Section~\ref{sec:analysis} where the proof 
  of our assertion can be found. As the proof is rather technical we provide
  it in full only for $p=3$ and demonstrate its generalisation for arbitrary
  odd $p\geq 5$. We omit numerical simulations and refer to the results given 
  in~\cite{Fr12, Fr13_1} that show for \emph{any} $p\geq 3$ a supercloseness for the 
  Galerkin method of order $p+1$.

%----------------------------------------------------------------------
 \section{Mesh, Method and a Solution Decomposition}\label{sec:mmsd}
%----------------------------------------------------------------------

  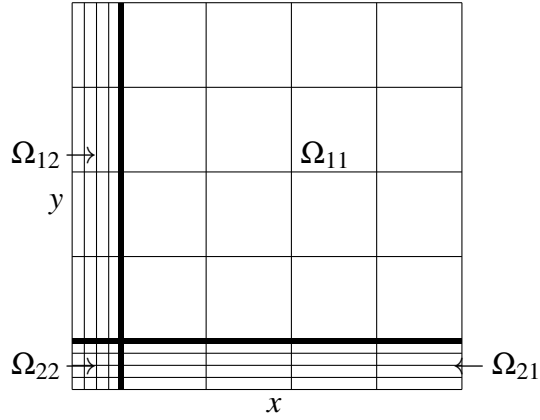
\begin{figure}
   \begin{center}
     \setlength{\unitlength}{0.57pt}
      \begin{picture}(256,256)
       \put(  0,  0){\line( 0, 1){256}}
       \put(  8,  0){\line( 0, 1){256}}
       \put( 16,  0){\line( 0, 1){256}}
       \put( 24,  0){\line( 0, 1){256}}
       \put( 88,  0){\line( 0, 1){256}}
       \put(144,  0){\line( 0, 1){256}}
       \put(200,  0){\line( 0, 1){256}}
       \put(256,  0){\line( 0, 1){256}}

       \put(  0,256){\line( 1, 0){256}}
       \put(  0,200){\line( 1, 0){256}}
       \put(  0,144){\line( 1, 0){256}}
       \put(  0, 88){\line( 1, 0){256}}
       \put(  0, 24){\line( 1, 0){256}}
       \put(  0, 16){\line( 1, 0){256}}
       \put(  0,  8){\line( 1, 0){256}}
       \put(  0,  0){\line( 1, 0){256}}

       \linethickness{2pt}
       \put( 32,  0){\line( 0, 1){256}}
       \put(  0, 32){\line( 1, 0){256}}
       \put(128,-15){$x$}
       \put(-15,120){$y$}
       \put(150,150){$\Omega_{11}$}
       \put(250, 10){$\leftarrow\Omega_{21}$}
       \put(-40,150){$\Omega_{12}\!\to$}
       \put(-40, 10){$\Omega_{22}\!\to$}
      \end{picture}
   \end{center}
   \caption{\label{fig:mesh}Shishkin mesh for Problem \eqref{eq:Lu}}
  \end{figure}

  We discretise the domain by a Shishkin mesh. Under the assumption
  \[
   \eps\leq\frac{\min\{\beta_1,\beta_2\}}{2\sigma\ln N}
  \]
  we define the mesh-transition points by
  \[
    \lambda_x:=\frac{\sigma\eps}{\beta_1}\ln N,\qquad
    \lambda_y:=\frac{\sigma\eps}{\beta_2}\ln N,
  \]
  where $\sigma\geq p+3/2$ is a user-chosen parameter.
  Let $\Omega_{11}=[\lambda_x,1]\times[\lambda_y,1]$,
      $\Omega_{12}=[0,\lambda_x]\times[\lambda_y,1]$,
      $\Omega_{21}=[\lambda_x,1]\times[0,\lambda_y]$,
  and $\Omega_{22}=[0,\lambda_x]\times[0,\lambda_y]$.
  The domain $\Omega$ is dissected by a tensor product mesh $T^N$,
  according to
  \begin{align*}
   x_i&:=\begin{cases}
          \frac{\sigma\eps}{\beta_1}\ln N\frac{2i}{N},
                                          &i=0,\dots,N/2,\\
          1-2(1-\lambda_x)(1-\frac{i}{N}),&i=N/2,\dots,N,
         \end{cases}\\
   y_j&:=\begin{cases}
          \frac{\sigma\eps}{\beta_2}\ln N\frac{2j}{N},
                                          &j=0,\dots,N/2,\\
          1-2(1-\lambda_y)(1-\frac{j}{N}),&j=N/2,\dots,N,
         \end{cases}.
  \end{align*}
  Figure~\ref{fig:mesh} shows an example of $T^N$ for \eqref{eq:Lu}.
  By $h_i$ and $k_j$ we denote the mesh sizes of a specific element
  $\tau_{ij}\in T^N$ in $x$- and $y$- direction, resp.

  Our finite-element space $V^N\subset H_0^1(\Omega)$ on $T^N$ is given by
  \[
     V^N:=\{v\in H_0^1(\Omega):\,v|_\tau\in\QS_p(\tau),\,\forall\tau\in T^N\},
  \]
  where $H_0^1(\Omega)$ is the standard Sobolev space
  $H_0^1(\Omega)=\{v\in H^1(\Omega): v|_{\partial\Omega}=0\}$
  with $v|_{\partial\Omega}=0$ being understood in the sense of traces
  and $\QS_p(\tau)$ is the space of polynomials of degree at most $p$
  in each coordinate direction.

  Then the Galerkin method can be written as:
  Find $u^N\in V^N$ such that
  \[
    a_{Gal}(u^N,v^N)=(f,v^N),\quad\mbox{for all }v^N\in V^N,
  \]
  where the bilinear form $a(\cdot,\cdot)$ is given by
  \[
   a_{Gal}(v,w):=\eps(\grad v,\grad w)+(cv-b\cdot\grad v,w),\quad\mbox{for all }v,\,w\in H_0^1(\Omega),
  \]
  and $(\cdot,\cdot)$ is the standard $L_2$-product in $\Omega$.

  Our analysis is based on a solution decomposition of $u$, which we provide here.
  
  %----------------------------------------
  \begin{ass}\label{ass:dec}
%----------------------------------------
   The solution $u$ of problem~\eqref{eq:Lu} can be decomposed as
   \begin{gather*}
      u =S+E_{12}+E_{21}+E_{22},
   \end{gather*}
   where we have for all $x,y\in[0,1]$ and $0\le i+j\le p+2$ the
   pointwise estimates
   \begin{equation}\label{eq:dec:C0}
    \left.
      \begin{aligned}
        \left|\frac{\partial^{i+j} S}{\partial x^i \partial y^j}(x,y)\right|
             &\le C,
        \quad
        \left|\frac{\partial^{i+j} E_{12}}{\partial x^i \partial y^j}(x,y)\right|
              \lesssim \eps^{-i}e^{-\beta_1 x/\eps},\\[0.2cm]
        \left|\frac{\partial^{i+j} E_{21}}{\partial x^i \partial y^j}(x,y)\right|
             &\lesssim \eps^{-j}e^{-\beta_2 y/\eps},\\[0.2cm]
        \left|\frac{\partial^{i+j} E_{22}}{\partial x^i \partial y^j}(x,y)\right|
             &\lesssim \eps^{-(i+j)} e^{-\beta_1 x/\eps}e^{-\beta_2 y/\eps}.
      \end{aligned}
    \right\}
   \end{equation}
   Here $E_{12}$ and $E_{21}$ are exponential boundary layers, $E_{22}$ is a the
   corner layer, and $S$ is the regular part of the solution.
  \end{ass}

  For conditions that guarantee the existence of such a decomposition,
  see \cite[Theorem III.1.26]{RST08}.

  \begin{rem}
   With Assumption~\ref{ass:dec} for $i+j\leq p+1$ we immediately have for $\PS_p$-
   or $\QS_p$-elements
   \[
    \enorm{u-u^N}\lesssim(N^{-1}\ln N)^p.
   \]
   For $\QS_p$-elements this result follows from the proof given in \cite{ST08}
   for the streamline-diffusion FEM.
  \end{rem}

%----------------------------------------------------------------------
 \section{Supercloseness Analysis}\label{sec:analysis}
%----------------------------------------------------------------------

  Before we start the analysis, let us define the two interpolation operators $\pi u$
  and $I^N u$ precisely.
  Let $\hat{a}_i$ and $\hat{e}_i$, $i=1,\ldots,4$, denote the vertices and
  edges of the reference element $\hat{\tau}=[-1,1]^2$, respectively.
  We define the \emph{vertex-edge-cell interpolation} operator
  $\hat{\pi}:C(\hat{\tau})\to\QS_p(\hat{\tau})$ by
  \begin{subequations}\label{eq:def:lin}
  \begin{alignat}{2}
   \hat{\pi} \hat{v}(\hat{a}_i)&=\hat{v}(\hat{a}_i),\,\quad i=1,\dots,4,
   &&\label{eq:general:def_inter1}\\
   \int_{\hat{e}_i}(\hat{\pi}\hat{v})\hat{q} &= \int_{\hat{e}_i} \hat{v} \hat{q},
   \quad i=1,\dots,4,\quad
   &&\hat{q}\in \PS_{p-2}(\hat{e}_i),\label{eq:general:def_inter2}\\
   \iint_{\hat{\tau}} (\hat{\pi}\hat{v})\hat{q} &= \iint_{\hat{\tau}} \hat{v} \hat{q},
   &&\hat{q}\in \QS_{p-2}(\hat{\tau}).
                                            \label{eq:general:def_inter3}
  \end{alignat}
  \end{subequations}
  This operator is uniquely defined and can be extended to the globally
  defined interpolation operator
  $\pi^N:C(\overline{\Omega})\to V^N$ by
  \[
   (\pi^N v)|_\tau := \big(\hat{\pi}(v\circ F_\tau)\big)\circ F_\tau^{-1}
   \quad\forall\tau\in T^N,\,v\in
   C(\overline{\Omega}),
  \]
  with the bijective reference mapping $F_\tau:\hat{\tau}\to\tau$.

  Let $-1=t_0<t_1<\dots<t_{p-1}<t_p=+1$ be the zeros of
  \[
    (1-t^2)L_{p}'(t)=0,\quad t\in[-1,1],
  \]
  where $L_p$ is the Legendre polynomial of degree $p$, normalised to $L_p(1)=1$.
  These points are also used in the Gau\ss-Lobatto
  quadrature rule of approximation order $2p-1$. Therefore, we refer
  to them as Gau\ss-Lobatto points.
  We define the \emph{Gauß-Lobatto interpolation} operator
  $\mathcal{I}:C(\hat{\tau})\to\QS_p(\hat{\tau})$ by values at
  \begin{align}\label{eq:def:GL}
     (\mathcal{I}\hat{v})(t_i,t_j) & := \hat{v}(t_i,t_j)
  \end{align}
  and extend it to the operator $I^N:C(\overline{\Omega})\to V^N$ in the same way as above.

  %----------------------------------------
  \begin{lem}
%----------------------------------------
    For the interpolation operators $\pi^N:C(\overline{\Omega})\rightarrow V^N$
    and $I^N:C(\overline{\Omega})\rightarrow V^N$
    holds the stability property
   \begin{align}
    \bignorm{\pi^N w}{L_\infty(\tau)}+%&\leq C\norm{w}{L_\infty(\tau)}\quad\forall w\in C(\tau),\,
%     \forall \tau\subset\overline{\Omega},\label{eq:high:prop_stab:Lin}\\
    \bignorm{I^N w}{L_\infty(\tau)}
    &\lesssim \norm{w}{L_\infty(\tau)}\quad\forall w\in C(\tau),\,
    \forall \tau\subset\overline{\Omega},\label{eq:inter:prop_stab}
   \end{align}
   and for $\tau_{ij}\subset\overline{\Omega}$ and $q\in[1,\infty]$,
   $2\leq s\leq p+1$, $1\leq t\leq p$ hold the anisotropic error estimates
   \begin{subequations}\label{eq:inter:aniso}
   \begin{align}
    \bignorm{w-\pi^N w}{L_q(\tau_{ij})}+
    \bignorm{w-I^N w}{L_q(\tau_{ij})}
     &\lesssim \sum_{r=0}^s h_i^{s-r}k_j^r
                \bignorm{\frac{\partial^{s}w}
                              {\partial x^{s-r}\partial y^{r}}}
                        {L_q(\tau_{ij})},\label{eq:inter:aniso:1}\\
    \bignorm{(w-\pi^N w)_x}{L_q(\tau_{ij})}+
    \bignorm{(w-I^N w)_x}{L_q(\tau_{ij})}
     &\lesssim \sum_{r=0}^t h_i^{t-r}k_j^r
                \bignorm{\frac{\partial^{t+1}w}
                              {\partial x^{t-r+1}\partial y^{r}}}
                        {L_q(\tau_{ij})}%,\\
%     \bignorm{(w-\pi^N w)_y}{L_q(\tau_{ij})}+
%     \bignorm{(w-I^N w)_y}{L_q(\tau_{ij})}
%      &\leq C \sum_{r=0}^t
%                 \bignorm{h_i^{t-r}k_j^r\frac{\partial^{t+1}w}
%                               {\partial x^{t-r}\partial y^{r+1}}}
%                         {L_q(\tau_{ij})}
   \end{align}
   \end{subequations}
   and similarly for the $y$-derivative.
  \end{lem}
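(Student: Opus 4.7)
I would reduce everything to the reference element $\hat\tau=[-1,1]^2$ and transfer the bounds to $\tau_{ij}$ via the affine map $F_{\tau_{ij}}$, under which $\partial_x\leftrightarrow(2/h_i)\partial_{\hat x}$, $\partial_y\leftrightarrow(2/k_j)\partial_{\hat y}$, and $L_q$-norms rescale by $(h_ik_j/4)^{1/q}$ (with the obvious modification when $q=\infty$).

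For the stability estimate \eqref{eq:inter:prop_stab}, I would observe that each functional defining $\hat\pi$ in \eqref{eq:def:lin} (vertex values, edge integrals against a fixed polynomial weight, cell integrals against a fixed polynomial weight) and each functional defining $\hat{\mathcal I}$ (point evaluation at a Gauß--Lobatto node) is continuous on $C(\hat\tau)$ with respect to $\norm{\cdot}{L_\infty(\hat\tau)}$. Since $\QS_p(\hat\tau)$ is finite-dimensional, all norms on it are equivalent, so the $L_\infty$-norm of the interpolant is controlled by the moduli of these functionals and hence by $\norm{\hat v}{L_\infty(\hat\tau)}$. Pushing forward via $F_{\tau_{ij}}$, which preserves $L_\infty$-norms, yields the claim on every element $\tau\subset\overline\Omega$.

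For the anisotropic error estimates I would use that both $\hat\pi$ and $\hat{\mathcal I}$ are the identity on $\QS_p(\hat\tau)$: for $\hat\pi$ this holds by construction, for $\hat{\mathcal I}$ because every element of $\QS_p(\hat\tau)$ is uniquely determined by its $(p+1)^2$ Gauß--Lobatto values. Given $\hat w\in W^{s,q}(\hat\tau)$ with $2\le s\le p+1$, I pick a near-best tensor-product approximant $\hat Q\in\QS_p(\hat\tau)$ and write $\hat w-\hat I\hat w=(\hat w-\hat Q)-\hat I(\hat w-\hat Q)$, where $\hat I$ stands for either reference operator. Bounding the second summand by an $L_q$-stability estimate on $\QS_p(\hat\tau)$ (again from finite-dimensionality) and the first by a Bramble--Hilbert argument yields a reference-element estimate whose right-hand side is a sum over $r=0,\dots,s$ of the mixed seminorms $\norm{\partial^s\hat w/(\partial\hat x^{s-r}\partial\hat y^r)}{L_q(\hat\tau)}$. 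A scaling argument via $F_{\tau_{ij}}$ then produces \eqref{eq:inter:aniso:1} with the correct $h_i^{s-r}k_j^r$ weights.

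The main technical obstacle is obtaining \emph{all} $s+1$ mixed-derivative terms with constants independent of the anisotropy ratio $h_i/k_j$. I would resolve this by constructing $\hat Q$ as the composition of one-dimensional polynomial approximations of degree $p$ in $x$ and in $y$ separately, so that the residual decouples into a sum of $x$- and $y$-directional contributions -- this is the classical anisotropic Bramble--Hilbert scheme (see e.g.\ the monograph of Apel). The gradient bound is handled analogously: after the chain rule transfers $\partial_x$ to $\partial_{\hat x}$ on $\hat\tau$, I combine an $H^1(\hat\tau)$-stability of both reference interpolants (once more from finite-dimensionality of $\QS_p(\hat\tau)$) with a Bramble--Hilbert argument of order $t$ applied to $\hat w_x$; the $y$-derivative bound follows by symmetry.
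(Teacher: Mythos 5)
The paper gives no argument of its own here: the lemma is quoted with a one-line proof citing Apel, Stynes--Tobiska and Franz--Matthies, and your sketch is essentially a reconstruction of the standard argument contained in those references (reference-element reduction, $L_\infty$-boundedness of the defining functionals plus finite-dimensionality of $\QS_p(\hat\tau)$ for \eqref{eq:inter:prop_stab}, and stability plus a Bramble--Hilbert/Deny--Lions argument with diagonal affine scaling for \eqref{eq:inter:aniso:1}). Two points should be made explicit, though. First, since the Gau\ss--Lobatto operator uses point values and $\hat\pi$ uses vertex values, bounding $\hat I(\hat w-\hat Q)$ requires control of $\norm{\hat w-\hat Q}{L_\infty(\hat\tau)}$, not merely $L_q$; this is where the restriction $s\geq 2$ enters, via the embedding $W^{s,q}(\hat\tau)\hookrightarrow C(\hat\tau)$ on the fixed reference square, and your phrase ``$L_q$-stability from finite-dimensionality'' silently uses this.

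Second, and more importantly, for the derivative estimate your formulation ``$H^1(\hat\tau)$-stability from finite-dimensionality plus a Bramble--Hilbert argument of order $t$ applied to $\hat w_x$'' is not sufficient as stated. A generic bounded projector onto $\QS_p(\hat\tau)$ does not have $\partial_{\hat x}(\hat I\hat w)$ determined by $\hat w_{\hat x}$ alone, and the generic stability-plus-Bramble--Hilbert argument on $\hat\tau$ followed by scaling produces the additional term $h_i^{-1}k_j^{t+1}\bignorm{\partial^{t+1}w/\partial y^{t+1}}{L_q(\tau_{ij})}$, which is exactly what the lemma must avoid and which blows up on the anisotropic Shishkin cells (e.g.\ in $\Omega_{12}$, where $h_i\sim\eps N^{-1}\ln N\ll k_j$). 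The missing ingredient is the directional (tensor-product) structure of both reference operators: the Gau\ss--Lobatto interpolant is the tensor product of one-dimensional Lagrange interpolations at the Gau\ss--Lobatto points, and $\hat\pi$ factors into one-dimensional endpoint-plus-moment projectors; this is precisely Apel's structural condition that allows the order-$t$ Bramble--Hilbert argument to be applied to $\hat w_{\hat x}$ and yields a right-hand side containing only derivatives with at least one $x$-differentiation. You invoke the Apel scheme for the $L_q$ estimate, so the idea is present, but for the gradient bound ``finite-dimensionality'' must be replaced by this tensor-product argument for the proof to be complete.
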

  \begin{proof}
   The proof can be found in \cite{Apel99,ST08,FrM10_1}.
  \end{proof}

  \begin{lem}\label{lem:inter}
   For the interpolation operators $\pi^N:C(\overline{\Omega})\rightarrow V^N$
   and $I^N:C(\overline{\Omega})\rightarrow V^N$ we have the interpolation
   error results
   \begin{subequations}
   \begin{align}
    \norm{u-\pi u}{0}+\norm{u-I^Nu}{0}&\lesssim (N^{-1}\ln N)^{p+1}\label{eq:inter_L2}\\
    \enorm{u-\pi u}+\enorm{u-I^Nu}&\lesssim (N^{-1}\ln N)^p\label{eq:inter:eps}
   \end{align}
   \end{subequations}
  \end{lem}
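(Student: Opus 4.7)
The plan is to use the solution decomposition from Assumption~\ref{ass:dec} together with linearity of both interpolation operators, writing
\[
  u-\pi^N u \;=\; (S-\pi^N S) + (E_{12}-\pi^N E_{12}) + (E_{21}-\pi^N E_{21}) + (E_{22}-\pi^N E_{22}),
\]
and analogously for $I^N$, and to estimate each summand separately on each of the four subdomains $\Omega_{11},\Omega_{12},\Omega_{21},\Omega_{22}$. Since both operators obey the anisotropic estimates \eqref{eq:inter:aniso} and the stability bound \eqref{eq:inter:prop_stab}, the arguments for $\pi^N$ and $I^N$ are structurally identical, so I would only carry one of them through in detail.

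For the smooth part $S$, all derivatives up to order $p+2$ are bounded by a constant, and every mesh cell satisfies $h_i,k_j\lesssim N^{-1}$, so \eqref{eq:inter:aniso:1} with $s=p+1$ and $q=2$ immediately yields $\norm{S-\pi^N S}{0}\lesssim N^{-(p+1)}$, and the derivative estimate with $t=p$ gives $\norm{(S-\pi^N S)_x}{0}+\norm{(S-\pi^N S)_y}{0}\lesssim N^{-p}$, which multiplied by $\eps^{1/2}\le 1$ contributes $\eps^{1/2}N^{-p}\le N^{-p}$ to $\enorm{\cdot}$. Thus the smooth part already achieves the claimed bounds.

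For the layer components I would split according to whether the corresponding subdomain is fine or coarse with respect to the decay direction of the layer. Take $E_{12}$, the exponential boundary layer in $x$. On the fine strip $\Omega_{12}\cup\Omega_{22}$ we have $h_i\lesssim\sigma\eps\beta_1^{-1}N^{-1}\ln N$, while $k_j$ may be as large as $N^{-1}$; the pointwise bound \eqref{eq:dec:C0} contributes $\eps^{-(s-r)}$ per $x$-derivative, which cancels exactly against $h_i^{s-r}$, leaving a factor $(N^{-1}\ln N)^{s-r}$, while $k_j^r$ stays harmless. Combined with $\norm{e^{-\beta_1 x/\eps}}{L_2(\Omega_{12}\cup\Omega_{22})}\lesssim \eps^{1/2}$ this gives the $(N^{-1}\ln N)^{p+1}$ bound in $L_2$ and $(N^{-1}\ln N)^p$ in the energy norm (the $\eps^{1/2}$ in the latter absorbing the loss of one power of $\eps$ in the derivative). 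On the coarse complement $\Omega_{11}\cup\Omega_{21}$ the layer itself is pointwise $\lesssim e^{-\beta_1\lambda_x/\eps}=N^{-\sigma}\le N^{-(p+3/2)}$, and so by the stability bound \eqref{eq:inter:prop_stab} both $E_{12}$ and $\pi^N E_{12}$ are of the same order there, yielding a bound negligible compared with the target. The analysis of $E_{21}$ is symmetric, and $E_{22}$ is handled by combining both splittings simultaneously, using that the corner layer is only non-negligible on $\Omega_{22}$ where both directions are fine.

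The main technical point is the careful book-keeping on the subdomains where the mesh is fine in one direction but coarse in the other: in those regions one cannot apply a single isotropic estimate but must exploit the anisotropic form \eqref{eq:inter:aniso:1} to match each power of $\eps^{-1}$ appearing in \eqref{eq:dec:C0} with the corresponding small mesh-size $h_i$, while all remaining factors of $k_j$ combine with $y$-derivatives that are $\eps$-independent. Once this accounting is done, the exponential decay of each layer away from its boundary and the choice $\sigma\ge p+3/2$ ensure that no term exceeds $(N^{-1}\ln N)^{p+1}$ in $L_2$, respectively $(N^{-1}\ln N)^p$ in the energy norm.
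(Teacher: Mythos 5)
Your argument is essentially the paper's: the authors give no proof themselves but delegate to \cite{Apel99,ST08,FrM10_1}, and the proof carried out there is precisely your scheme --- decompose $u$ by Assumption~\ref{ass:dec}, on the fine subdomains use the anisotropic bounds \eqref{eq:inter:aniso} so that each factor $\eps^{-1}$ from \eqref{eq:dec:C0} is cancelled by a mesh width $\lesssim\eps N^{-1}\ln N$, and on the coarse subdomains combine the $L_\infty$-stability \eqref{eq:inter:prop_stab} with the smallness $e^{-\beta_1\lambda_x/\eps}=N^{-\sigma}$, $\sigma\ge p+3/2$. The only detail you gloss over is the $\eps^{1/2}\norm{\grad(\cdot)}{0}$ contribution of the layer terms on their coarse subdomains, which is handled in the standard way by estimating the layer derivatives directly and the interpolant via an inverse inequality, so your sketch is correct and matches the cited proofs.
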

  \begin{proof}
    The proof can be found in \cite{Apel99,ST08,FrM10_1}.
  \end{proof}

  Let us come to the supercloseness analysis and denote by $J^Nu\in V^N$ some
  interpolation of $u$. Then the analysis is based on a standard arguments
  involving coercivity and Galerkin orthogonality and yields
  \begin{gather}\label{eq:analysis:start}
    \enorm{J^Nu-u^N}^2\lesssim a_{Gal}(J^Nu-u^N,J^Nu-u^N)=-a_{Gal}(u-J^Nu,\chi)
  \end{gather}
  where $\chi:=J^Nu-u^N\in V^N$. Thus one has to estimate
  \begin{subequations}
   \begin{align}
    &\eps(\grad(u-J^Nu),\grad\chi)\label{eq:analysis:I}\\
    &(b\cdot\grad(u-J^Nu),\chi)\label{eq:analysis:IIa}
    \mbox{ or equivalently using integration by parts }
     (u-J^Nu,b\cdot\grad\chi)\\%\tag{\ref{eq:analysis:IIa}'}\label{eq:analysis:IIb}\\
    &(c(u-J^Nu),\chi)\label{eq:analysis:IIIa}
    \mbox{ or if integration by parts was used }
    ((c-\mbox{div}b)(u-J^Nu),\chi)%\tag{\ref{eq:analysis:IIIa}'}\label{eq:analysis:IIIb}
   \end{align}
  \end{subequations}

  \begin{lem}\label{lem:1}
    It holds
   \begin{gather}\label{eq:analysis:1}
   |(c(u-J^Nu),\chi)|\lesssim(N^{-1}\ln N)^{p+1}\enorm{\chi}
   \end{gather}    
  \end{lem}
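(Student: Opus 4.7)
The estimate is essentially a direct application of the interpolation results already established, combined with Cauchy--Schwarz. My plan is as follows.

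First I would apply the Cauchy--Schwarz inequality in $L_2(\Omega)$ together with $c\in L_\infty(\Omega)$ to obtain
\[
|(c(u-J^Nu),\chi)|\le \norm{c}{L_\infty(\Omega)}\,\norm{u-J^Nu}{0}\,\norm{\chi}{0}.
\]
The constant $\norm{c}{L_\infty(\Omega)}$ is absorbed into the generic constant hidden in $\lesssim$.

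Next I would invoke the $L_2$ interpolation estimate \eqref{eq:inter_L2} from Lemma~\ref{lem:inter}, which gives
\[
\norm{u-J^Nu}{0}\lesssim (N^{-1}\ln N)^{p+1}
\]
uniformly in $\eps$ for both choices $J^N\in\{\pi^N,I^N\}$.

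Finally, from the very definition of the energy norm $\enorm{\cdot}^2=\eps\norm{\grad\cdot}{0}^2+\norm{\cdot}{0}^2$ we have the trivial bound $\norm{\chi}{0}\le\enorm{\chi}$. Combining the three displays yields the claim. There is no real obstacle: the inequality relies only on the already-proven $L_2$ superapproximation of the interpolant and the boundedness of $c$, so the entire argument is one short chain of inequalities. The more delicate convection term \eqref{eq:analysis:IIa} and diffusion term \eqref{eq:analysis:I}, which actually drive the analysis in Section~\ref{sec:analysis}, are not touched here.
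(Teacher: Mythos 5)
Your argument is correct and coincides with the paper's own proof: Hölder/Cauchy--Schwarz with $\norm{c}{L_\infty(\Omega)}$, the $L_2$ estimate \eqref{eq:inter_L2} for either $J^N\in\{\pi^N,I^N\}$, and $\norm{\chi}{0}\le\enorm{\chi}$. The paper merely adds the one-line remark that the same bound holds for the variant term with $c-\operatorname{div}b$ arising after integration by parts, which your argument covers verbatim since $b\in W_\infty^1(\Omega)$.
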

  \begin{proof}
    Assuming $J^N$ to be any of our two interpolation operators $\pi^N$ or $I^N$,
    the $L_2$ interpolation error estimate \eqref{eq:inter_L2} yields for the
    reaction term \eqref{eq:analysis:IIIa}
    \[
     |(c(u-J^Nu),\chi)|\leq \norm{c}{L_\infty(\Omega)}
                           \norm{u-J^Nu}{0}
                           \norm{\chi}{0}
                      \lesssim(N^{-1}\ln N)^{p+1}\enorm{\chi}
    \]
    and similarly for the term involving $c-\mbox{div} b$.
  \end{proof}
  \begin{lem}\label{lem:2}
   It holds
  \begin{align}
    |\eps(\grad(u-\pi^Nu),\grad\chi)|&\lesssim N^{-(p+1)}\enorm{\chi},\label{eq:analysis:2}\\
    |\eps(\grad(u-  I^Nu),\grad\chi)|&\lesssim (N^{-1}\ln N)^{p+1}\enorm{\chi},\label{eq:analysis:3}
  \end{align}
  \end{lem}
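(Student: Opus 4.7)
The plan is to apply the Cauchy--Schwarz inequality and the bound $\eps^{1/2}\|\grad\chi\|_0\le\enorm{\chi}$, thereby reducing the task to estimating $\eps^{1/2}\|\grad(u-J^Nu)\|_0$ by the stated order. I split $u=S+E_{12}+E_{21}+E_{22}$ and treat each component on each of the four Shishkin subdomains $\Omega_{ij}$ separately.

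For the layer components on their respective fine-mesh subdomains, the anisotropic estimate \eqref{eq:inter:aniso}, combined with the pointwise bounds \eqref{eq:dec:C0} and a careful choice of the summation index $r$, yields the required order: the small mesh sizes $h_i,k_j\sim\eps N^{-1}\ln N$ compensate the $\eps^{-i}$-factors arising in the derivative bounds, producing at worst $(N^{-1}\ln N)^{p+1}$. On the coarse subdomain $\Omega_{11}$ the layer components are exponentially small, contributing a factor $\eps^{-i}\e^{-\beta\lambda/\eps}\lesssim \eps^{-i}N^{-\sigma}$ with $\sigma\ge p+3/2$, which comfortably absorbs the $\eps^{1/2}$ weight and any powers of $\eps^{-1}$.

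The delicate term is the smooth part $S$ on $\Omega_{11}$, where $h_i,k_j\sim N^{-1}$. Here the direct anisotropic bound yields only $\eps^{1/2}\|\grad(S-J^NS)\|_{L_2(\Omega_{11})}\lesssim \eps^{1/2}N^{-p}$, which is insufficient since we only have $\eps\lesssim 1/\ln N$. To recover the missing $N^{-1}$ (or $N^{-1}\ln N$) factor I integrate by parts elementwise,
\[
   \int_\tau (S-J^NS)_x\,\chi_x = -\int_\tau(S-J^NS)\,\chi_{xx} + \int_{\partial\tau}(S-J^NS)\,\chi_x\,n_x,
\]
and expand $\chi_{xx}\in\QS_{p-2,p}$ and the edge traces $\chi_x|_e\in\PS_p(e)$ in Legendre bases. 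For $J^N=\pi^N$ the cell and edge moment conditions \eqref{eq:general:def_inter2}--\eqref{eq:general:def_inter3} annihilate all Legendre modes of degree $\le p-2$, leaving only the top two $y$-modes; a Bramble--Hilbert estimate on these residuals delivers the sharp bound $N^{-(p+1)}$ without logarithmic factors. For $J^N=I^N$, the same integration by parts combined with the exactness of the Gauß--Lobatto quadrature on $\QS_{2p-1}$ gives analogous but weaker identities $\int_\tau(S-I^NS)q = \int_\tau Sq - Q_{\mathrm{GL}}(Sq)$, producing the bound $(N^{-1}\ln N)^{p+1}$.

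The main obstacle will be the Legendre-mode bookkeeping for the smooth part on $\Omega_{11}$: it is here that the two interpolation operators behave differently, and the absence (respectively, presence) of the $(\ln N)^{p+1}$ factor has to be tracked carefully through the integration-by-parts argument. The layer-region contributions are essentially routine once the correct anisotropic summation index $r$ in \eqref{eq:inter:aniso} has been chosen for each subdomain.
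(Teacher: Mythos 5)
The decisive gap is your treatment of the layer components on the fine-mesh regions, which you declare ``essentially routine''. After Cauchy--Schwarz you would need $\eps^{1/2}\norm{\grad(E_{12}-J^NE_{12})}{0}\lesssim(N^{-1}\ln N)^{p+1}$, but the anisotropic estimate \eqref{eq:inter:aniso} cannot deliver this: on $\Omega_{12}$ the dominant contribution to the $x$-derivative error (take $t=p$, $r=0$, which is the best the lemma allows) is $h_i^{p}\norm{\partial_x^{p+1}E_{12}}{0,\Omega_{12}}\lesssim(\eps N^{-1}\ln N)^{p}\,\eps^{-(p+1)}\eps^{1/2}=\eps^{-1/2}(N^{-1}\ln N)^{p}$, so after multiplying by the available weight $\eps^{1/2}$ you obtain only $(N^{-1}\ln N)^{p}$ --- exactly the order-$p$ bound \eqref{eq:inter:eps}, one full power short of the claim. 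The small mesh width compensates the $\eps^{-i}$ factors only up to order $p$ for the gradient; to reach $p+1$ for the layer terms one must exploit cancellation against $\chi\in V^N$ through the orthogonality (moment) properties of the vertex-edge-cell interpolant, i.e.\ Lin-type superconvergence identities, and not mere interpolation-error bounds. This is precisely what the paper imports from \cite[Lemma 10]{ST08}: that proof already yields order $p+1$ for every term except $N^{1/2}\norm{\pi^NE_{22}}{0,\Omega_{12}\cup\Omega_{21}}\lesssim N^{-(\sigma-1/2)}$, and the present paper merely observes that the choice $\sigma\geq p+3/2$ turns this limiting term into $N^{-(p+1)}$, giving \eqref{eq:analysis:2}. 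Your remark that on $\Omega_{11}$ the factor $N^{-\sigma}$ ``comfortably absorbs \dots any powers of $\eps^{-1}$'' is also false as stated, since $\eps$ may be arbitrarily small relative to any power of $N^{-1}$; there one must avoid the derivative bounds altogether and instead use the smallness of $E$ itself together with the $L_\infty$-stability \eqref{eq:inter:prop_stab}.

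Your mechanism for the smooth part on $\Omega_{11}$ (elementwise integration by parts plus the moment conditions \eqref{eq:general:def_inter2}--\eqref{eq:general:def_inter3}) is the right kind of idea for $\pi^N$, but for $I^N$ the quadrature argument is not substantiated: $\int_\tau(I^NS)q$ is not the Gau\ss--Lobatto quadrature of $Sq$ once $q$ has degree $p$ in one variable (the product then has degree $2p$ there, beyond the exactness $2p-1$), so the claimed identity fails. The paper's route to \eqref{eq:analysis:3} is entirely different and much shorter: it uses the identity $I^N_pu=\pi^N_pu+Ru+(u-\pi^N_{p+1}u)$, a consequence of $\pi^N_p=I^N_p\pi^N_{p+1}$ from \cite{Fr12}, together with the already established bound \eqref{eq:analysis:2}, the order-$(p+1)$ energy interpolation estimate for $\pi^N_{p+1}$, and $\eps^{1/2}\norm{\grad Ru}{0}\lesssim(N^{-1}\ln N)^{p+1}$ from \cite[Theorem 4.4]{Fr12}. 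As written, your proposal proves at best an order-$p$ bound for the diffusion term, so the gap is essential rather than cosmetic.
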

  \begin{proof}
   In the case of the vertex-edge-cell interpolation operator $\pi^Nu$ we find in
   \cite[Lemma 10]{ST08} the estimate 
   \[%begin{gather}\label{eq:analysis:2}
     |\eps(\grad(u-\pi^Nu),\grad\chi)|\lesssim N^{-(p+1/2)}\enorm{\chi}.
   \]
   A close inspection of the proof shows, that the only limiting
   term comes from \cite[(3.16)]{ST08}
   \[
    N^{1/2}\norm{\pi^NE_{22}}{0,\Omega_{12}\cup\Omega_{21}}
    \lesssim (\eps(\eps+N^{-1}\ln N))^{1/2}N^{-(\sigma-1/2)}
    \lesssim (\eps(\eps+N^{-1}\ln N))^{1/2}N^{-(p+1/2)}
   \]
   because $\sigma\geq p+1$ was chosen in \cite{ST08}.
   All other terms involved are of order $p+1$.
   In our paper we have $\sigma\geq p+3/2$,
   and therefore \eqref{eq:analysis:2} follows.

   For the Gauß-Lobatto interpolation operator $I^N$
   we denote by a subscript the polynomial order of the interpolation,
   i.e. we write $I^N_p$ and $\pi^N_p$ for the interpolation operators projecting
   into the FEM-spaces of order $p$.

   In \cite{Fr12} we find the identity
   \[
     I^N_p u = \pi^N_p u+\left(I^N(u-\pi^N_{p+1}u)-(u-\pi^N_{p+1}u))\right)+(u-\pi^N_{p+1}u)
   \]
   also written as
   \begin{gather}\label{eq:identity}
     I^N_p u = \pi^N_p u+R u+(u-\pi^N_{p+1}u)
   \end{gather}
   where $Ru:= I^N(u-\pi^N_{p+1}u)-(u-\pi^N_{p+1}u))$. These are consequences of the basic
   identity
   \[
     \pi^N_p=I^N_p\pi^N_{p+1}.
   \]
   We apply \eqref{eq:identity} to the diffusion term \eqref{eq:analysis:I} and obtain
   \[
    \eps|(\grad(u-I_p^Nu),\grad\chi)|
     \leq  \eps|(\grad(u-\pi_p^N u),\grad\chi)|
          +\eps|(\grad(u-\pi_{p+1}^Nu),\grad\chi)|
          +\eps|(\grad Ru,\grad\chi)|.
   \]
   Now \eqref{eq:analysis:2}, the interpolation error result \eqref{eq:inter:eps}
   for $p+1$ and \cite[Theorem 4.4]{Fr12}, i.e.
   \[
     \eps^{1/2}\norm{\grad Ru}{0}\lesssim (N^{-1}\ln N)^{p+1}
   \]
   prove \eqref{eq:analysis:3}.
  \end{proof}

  What is left is the convective term \eqref{eq:analysis:IIa} and we will analyse it
  for the Gauß-Lobatto interpolation operator $I^N$. This estimate is the crucial point
  of the analysis. Stynes and Tobiska \cite[Remark 16]{ST08} state that the so called
  Lin-identities of \cite{Lin91,Yan08} do not yield bounds of order $p+1$. Instead, they use
  a fairly standard trick in the analysis of stabilised methods to obtain the order $p+1/2$
  for the streamline-diffusion method and the vertex-edge-cell interpolation operator $\pi^N$.
  
  \begin{lem}\label{lem:3}
     It holds for any boundary layer function $E$ of our decomposition $u=S+E_1+E_2+E_{12}$
     \begin{gather}\label{eq:analysis:4}
      |(E-I^NE,b\cdot\grad\chi)|
         \lesssim (N^{-1}\ln N)^{p+1}\enorm{\chi}.
     \end{gather}   
  \end{lem}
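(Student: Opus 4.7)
My plan is to prove \eqref{eq:analysis:4} for each of $E\in\{E_{12},E_{21},E_{22}\}$ by splitting $\Omega$ into the four subregions $\Omega_{11},\Omega_{12},\Omega_{21},\Omega_{22}$ and handling each with a technique suited to whether $E$ is large or exponentially small there, and whether the mesh is fine or coarse. The core difficulty is that a direct Cauchy--Schwarz on $(E-I^NE,b\cdot\grad\chi)$ yields only $\norm{E-I^NE}{0}\cdot\eps^{-1/2}\enorm{\chi}$, so the stray half-power of $\eps$ must be recovered either from integrating the exponential profile of $E$ or from structural properties of $I^N$.

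I sketch the representative case $E=E_{12}$. On the layer region $\Omega_{12}\cup\Omega_{22}$ the mesh is fine in $x$, with $h_i\lesssim\sigma\eps\beta_1^{-1}N^{-1}\ln N$, and the anisotropic estimate \eqref{eq:inter:aniso:1} combined element-wise with the derivative bound $|\partial_x^{p+1}E_{12}|\lesssim\eps^{-(p+1)}e^{-\beta_1 x/\eps}$ from Assumption~\ref{ass:dec} gives
\[
  \bignorm{E_{12}-I^NE_{12}}{0,\tau_{ij}}\lesssim h_i^{p+1}\eps^{-(p+1)}\bignorm{e^{-\beta_1 x/\eps}}{0,\tau_{ij}}+(\text{anisotropic cross-terms of the same order}).
\]
Summing over elements and using $\norm{e^{-\beta_1 x/\eps}}{0}\lesssim\eps^{1/2}$ produces $\norm{E_{12}-I^NE_{12}}{0,\Omega_{12}\cup\Omega_{22}}\lesssim\eps^{1/2}(N^{-1}\ln N)^{p+1}$. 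The factor $\eps^{1/2}$ exactly cancels the $\eps^{-1/2}$ coming from $\norm{\grad\chi}{0}\leq\eps^{-1/2}\enorm{\chi}$, producing the required bound on this subregion.

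On the non-layer region $\Omega_{11}\cup\Omega_{21}$ the mesh is uniform in $x$ with spacing $\Theta(N^{-1})$, and Assumption~\ref{ass:dec} together with $\sigma\geq p+3/2$ gives $|E_{12}|\lesssim N^{-\sigma}$ pointwise. Because the mesh is coarse, the element-wise interpolation argument yields at most $N^{-(p+1/2)}\enorm{\chi}$, falling short of the required order by a factor $N^{1/2}$ up to logs. To close this gap I would invoke the identity \eqref{eq:identity},
\[
  E_{12}-I^N_pE_{12}=(E_{12}-\pi^N_pE_{12})-(E_{12}-\pi^N_{p+1}E_{12})-RE_{12},
\]
and treat the three summands separately: the first by the Stynes--Tobiska analysis of \cite[Lemma~10]{ST08}, which under the strengthened choice $\sigma\geq p+3/2$ (versus $\sigma\geq p+1$ used there) gains the missing half-order exactly as in the proof of Lemma~\ref{lem:2}; the second by the higher-order interpolation error of $\pi^N_{p+1}$ combined with the $L_\infty$-smallness of $E_{12}$ via \eqref{eq:inter:prop_stab}; and the third by the estimate $\eps^{1/2}\norm{\grad RE_{12}}{0}\lesssim(N^{-1}\ln N)^{p+1}$ taken from \cite[Theorem~4.4]{Fr12}.

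The cases $E=E_{21}$ and $E=E_{22}$ proceed by the same scheme, with $x$ and $y$ exchanged for $E_{21}$ and with both directions anisotropically refined for $E_{22}$. The principal obstacle is the non-layer region, where the element-wise approach falls short and both the decomposition \eqref{eq:identity} and the strengthened mesh parameter $\sigma\geq p+3/2$ are needed to reach the claimed order $(N^{-1}\ln N)^{p+1}$ rather than the $p+1/2$ order recovered from \cite{ST08} alone.
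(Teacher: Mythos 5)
Your treatment of the layer region $\Omega_{12}\cup\Omega_{22}$ coincides with the paper's: anisotropic bounds \eqref{eq:inter:aniso:1} on the fine-in-$x$ mesh, with the factor $\eps^{1/2}$ from $\norm{e^{-\beta_1x/\eps}}{0}$ cancelling the $\eps^{-1/2}$ coming from $\norm{\grad\chi}{0}\leq\eps^{-1/2}\enorm{\chi}$. The gap is in the coarse region, and it is a gap of diagnosis as well as of remedy. The obstruction there is not a missing half power of $N$ recoverable from the diffusion-term machinery; it is again the stray $\eps^{-1/2}$, which reappears whenever an $L_2$ bound of $E_{12}-I^NE_{12}$ \emph{without} an $\eps^{1/2}$ weight is paired with $b\cdot\grad\chi$ (recall $\eps$ is not coupled to $N$, so $\eps^{-1/2}N^{-\sigma}$ is not small). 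None of the three tools you invoke addresses this: \cite[Lemma 10]{ST08} and the half-order gain from $\sigma\geq p+3/2$ used in Lemma~\ref{lem:2} concern the diffusion term $\eps(\grad(u-\pi^Nu),\grad\chi)$, whose leading factor $\eps$ is exactly what is absent in the convective pairing; the estimate $\eps^{1/2}\norm{\grad Ru}{0}\lesssim(N^{-1}\ln N)^{p+1}$ from \cite[Theorem 4.4]{Fr12} controls $\grad Ru$ with an $\eps^{1/2}$ weight, whereas here you need $\norm{RE_{12}}{0}$ against $\eps^{-1/2}\enorm{\chi}$ (or, after integrating by parts, $\norm{\grad RE_{12}}{0}$ with no $\eps^{1/2}$ in sight); and the convective estimates of \cite{ST08} themselves rest on the streamline-diffusion norm $\delta^{1/2}\norm{b\cdot\grad\chi}{0}$, unavailable for the Galerkin method — the very point the paper makes just before Lemma~\ref{lem:3}. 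So the detour through \eqref{eq:identity} leaves all three summands unestimated precisely where you need them.

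The paper closes the coarse region directly and much more simply, with two elementary devices that are missing from your proposal. On $\Omega_{11}$, the triangle inequality, the $L_\infty$-stability \eqref{eq:inter:prop_stab} and the smallness $|E_{12}|\lesssim N^{-\sigma}$ for $x\geq\lambda_x$ give $\norm{E_{12}-I^NE_{12}}{0,\Omega_{11}}\lesssim(\eps^{1/2}+N^{-1/2})N^{-\sigma}$; the part carrying $\eps^{1/2}$ cancels the $\eps^{-1/2}$, while for the part carrying $N^{-1/2}$ an inverse inequality on the quasi-uniform coarse mesh, $\norm{\grad\chi}{0,\Omega_{11}}\lesssim N\norm{\chi}{0,\Omega_{11}}\leq N\enorm{\chi}$, trades the gradient for the $L_2$ part of the energy norm, and $\sigma\geq p+3/2$ absorbs the resulting $N^{1/2}$. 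On $\Omega_{21}$ — which you lump together with $\Omega_{11}$, although the mesh there is fine in $y$, so the same inverse inequality is not available — the paper instead uses $\meas(\Omega_{21})\lesssim\eps\ln N$ together with $L_\infty$-stability to manufacture the needed $\eps^{1/2}$, yielding $\eps^{1/2}(\ln N)^{1/2}N^{-\sigma}$. With these two steps the proof of the lemma is self-contained and needs neither \eqref{eq:identity} nor any result imported from the stabilised-method analysis.
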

  \begin{proof}
    We will make use of the anisotropic interpolation error bounds
    \eqref{eq:inter:aniso:1} and derive
     \begin{align*}
      \norm{E_{12}-I^NE_{12}}{0,\Omega_{12}\cup\Omega_{22}}^2
       &\lesssim \sum_{\tau_{ij}\subset\Omega_{12}\cup\Omega_{22}}
                \sum_{r=0}^{p+1} h_i^{s-r}k_j^{r}
                 \bignorm{\frac{\partial^{s}E_{12}}
                               {\partial x^{s-r}\partial y^{r}}}
                         {0,\tau_{ij}}^2\notag\\
       &\lesssim \sum_{r=0}^{p+1}
                 (\eps N^{-1}\ln N)^{2(s-r)}
                 N^{-2r}
                 \eps^{2(r-s)}
                 \bignorm{e^{-\beta_1 x/\eps}}
                         {0,\Omega_{12}\cup\Omega_{22}}^2\notag\\
       &\lesssim \eps(N^{-1}\ln N)^{2(p+1)}
      \intertext{while ideas from \cite[Lemma 9]{ST08} help us with}
      \norm{E_{12}-I^NE_{12}}{0,\Omega_{11}}
       &\lesssim \norm{E_{12}}{0,\Omega_{11}}
                +\norm{I^NE_{12}}{0,\Omega_{11}}\notag\\
       &\lesssim \eps^{1/2}N^{-\sigma}+(\eps^{1/2}+N^{-1/2})N^{-\sigma}
        \lesssim (\eps^{1/2}+N^{-1/2})N^{-\sigma}
      \intertext{and finally a Hölder inequality, stability \eqref{eq:inter:prop_stab} and
                 $\meas(\Omega_{21})\lesssim\eps\ln N$ yields}
      \norm{E_{12}-I^NE_{12}}{0,\Omega_{21}}
       &\lesssim \meas(\Omega_{21})^{1/2}\left(\norm{E_{12}}{L_\infty(\Omega_{21})}
                                              +\norm{I^NE_{12}}{L_\infty(\Omega_{11})}\right)\notag\\
       &\lesssim \eps^{1/2}(\ln N)^{1/2}N^{-\sigma}.
     \end{align*}
%
%      where ideas of  were used in estimating the second interpolation error.
     Thus, we obtain
     \begin{align*}
      |(E_{12}-I^NE_{12},b\cdot\grad\chi)|
       &\lesssim \eps^{1/2}((N^{-1}\ln N)^{p+1}+N^{-\sigma}(\ln N)^{1/2})\norm{\grad\chi}{0,\Omega}
                +N^{-\sigma-1/2}\norm{\grad\chi}{0,\Omega_{11}}\\
       &\lesssim (N^{-1}\ln N)^{p+1}\enorm{\chi}
                +N^{-\sigma+1/2}\norm{\chi}{0,\Omega_{11}}
        \lesssim (N^{-1}\ln N)^{p+1}\enorm{\chi}
     \end{align*}
     where $\sigma\geq p+3/2$ and an inverse inequality was used in estimating in $\Omega_{11}$.
     Similarly the other two layer terms can be estimated.
  \end{proof}

  Surprisingly, the real difficulty lies in the estimation of the convective term
  \eqref{eq:analysis:IIa} for the smooth part $S$. The following estimates are
  rather technical. Therefore we split the analysis and start with the one-dimensional
  case and the polynomial order $p=3$. The generalisation into arbitrary odd order $p$ and 2d follows.
  Some ideas of our proof go back 30 years to Axelsson and Gustafsson~\cite{AG81}.
  
  The basic idea is to use a special representation of a piecewise cubic function $v$
  with a basis consisting almost completely of functions that are symmetric
  w.r.t. their domain of support.
  
%   We choose for $v$ the representation
% %
%   \begin{gather}\label{eq:representation}
%        v=\sum_{i=1}^{N-1}(v_i\phi_i+w_i\psi_{3,i})+\sum_{i=1}^Ny_j\chi_{2,j}+w_n\psi_{3,n}.
%   \end{gather}
% 
  They are defined on the reference intervals with Legendre 
  polynomials $L_k$ normalised to $L_k(1)=1$.
  We define the standard piecewise linear hat-function
%   \begin{subequations}
  \begin{align*}
      \hat\phi(t)&:=\frac{1-L_1(2|t|-1)}{2}
                  =1-|t|&
                 &\mbox{for }t\in[-1,1],
  \intertext{a quadratic bubble function}
      \hat\chi_2(t)&:=\frac{1-L_2(2t-1)}{2}
                    =3t(1-t)&
                 &\mbox{for }t\in[0,1]
  \intertext{and a piecewise cubic bubble function}
      \hat\psi_3(t)&:=\frac{L_1(2|t|-1)-L_3(2|t|-1)}{2}
                    = 5 |t|(2|t| - 1) (|t| - 1)&
                 &\mbox{for }t\in[-1,1].
  \end{align*}
%   \end{subequations}
%     
  Figure~\ref{fig:basis}
  \begin{figure}
      \begin{center}
       \includegraphics[width=0.32\textwidth]{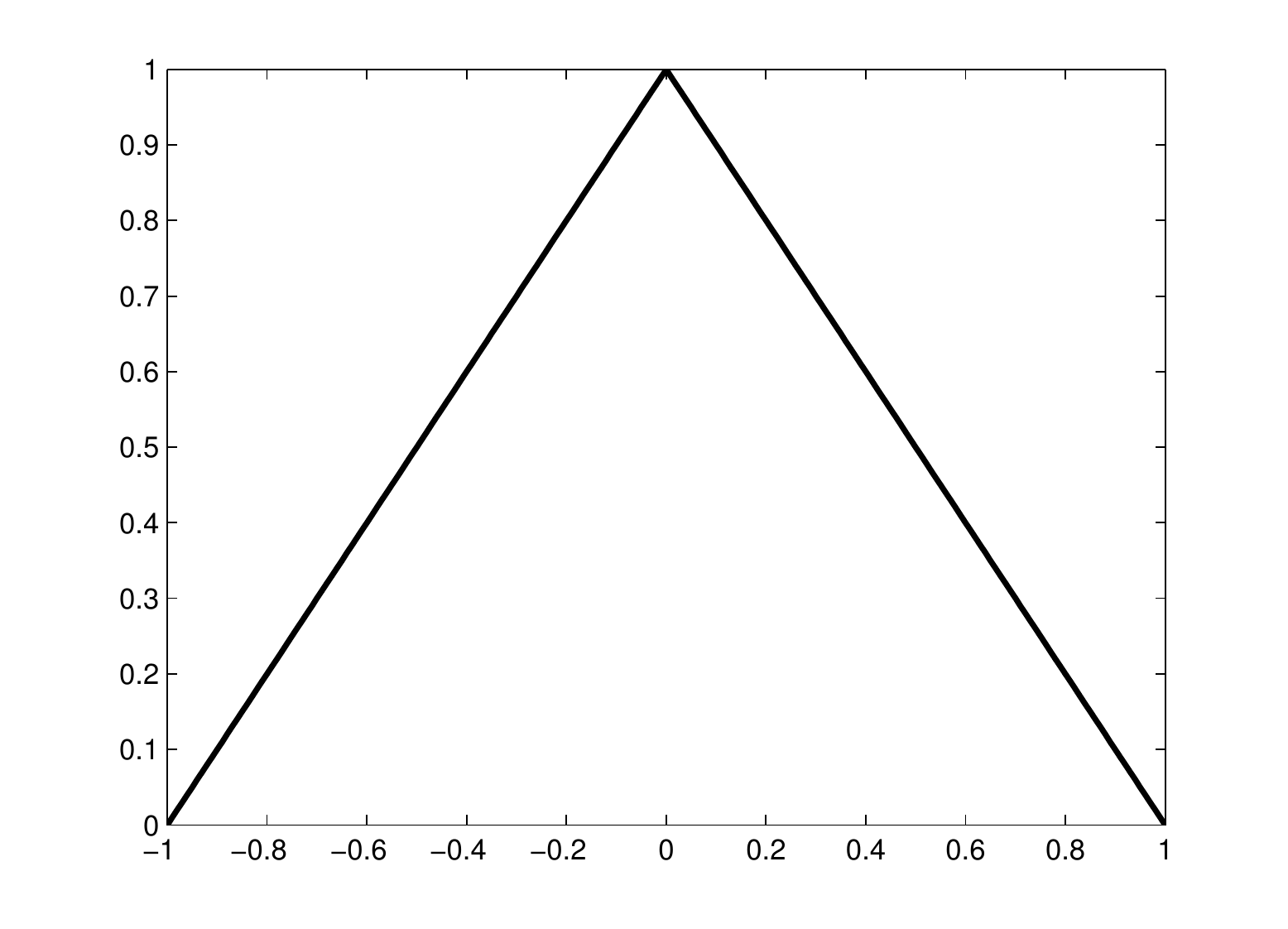}
       \includegraphics[width=0.32\textwidth]{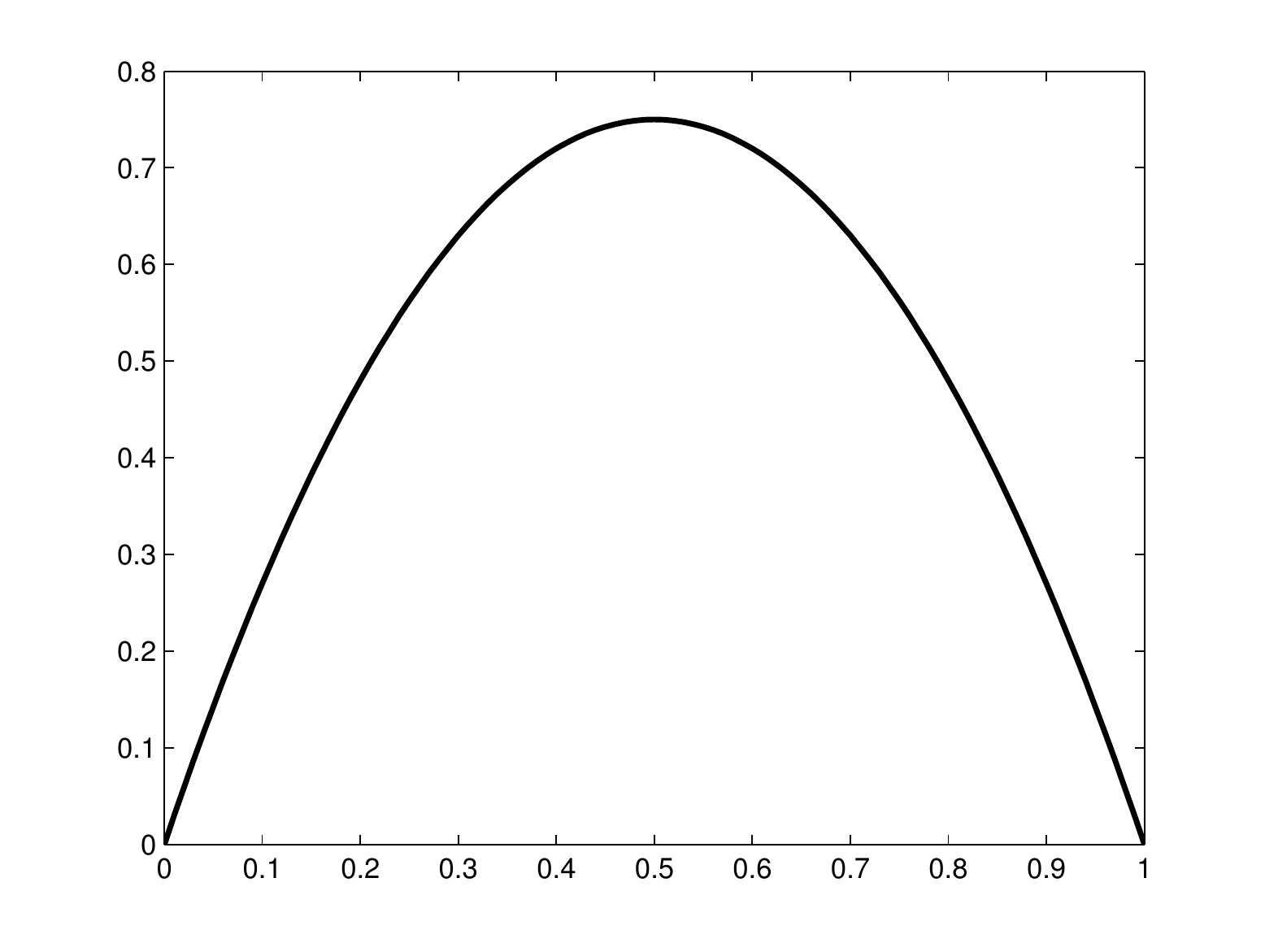}
       \includegraphics[width=0.32\textwidth]{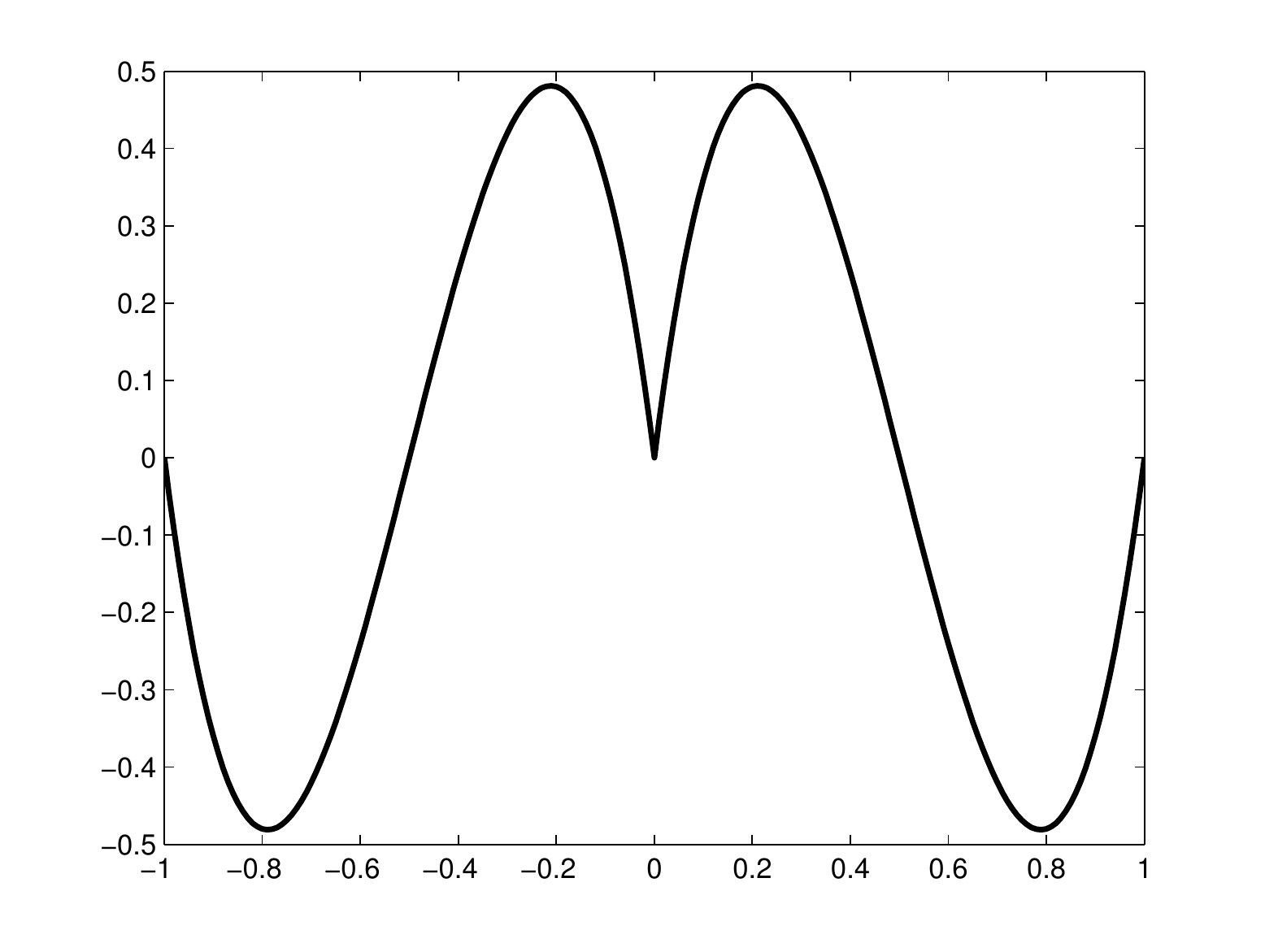}
      \end{center}
      \caption{Basis function $\hat\phi$ (left), $\hat\chi_2$ (middle) and $\hat\psi_3$ (right) on their domains of support
               \label{fig:basis}}
  \end{figure}
  shows the three basis functions on their reference intervals. Let us denote by
  $F_i$ the piecewise linear mapping of $[-1,1]$ onto $[x_{i-1},x_{i+1}]$, such that
  $[-1,0]$ is mapped linearly onto $[x_{i-1},x_i]$ and $[0,1]$ is mapped linearly onto
  $[x_i,x_{i+1}]$. Note that in general the mapping $F_i$ is non-linear.

  Above transformation and the functions on the reference intervals lead to
  the definition of the basis functions
  \begin{align*}
     \phi_i(x)&=\begin{cases}
                   \hat\phi(F_i^{-1}(x)),&x\in[x_{i-1},x_{i+1}]\\
                   0 & \mbox{otherwise}
                \end{cases},&&\,i=1,\dots,N-1,\\
     \chi_{2,i}(x)&=\begin{cases}
                      \hat\chi_2\left(\frac{x-x_{i-1}}{h_i}\right),&x\in[x_{i-1},x_{i}]\\
                      0 & \mbox{otherwise}
                    \end{cases},&&\,i=1,\dots,N,\\
     \psi_{3,i}(x)&=\begin{cases}
                      \hat\psi_3(F_i^{-1}(x)),&x\in[x_{i-1},x_{i+1}]\\
                      0 & \mbox{otherwise}
                     \end{cases},&&\,i=1,\dots,N-1.
  \end{align*}
  Finally, $\psi_{3,N}$ is the left part of $\hat\psi_3$ mapped onto $[x_{N-1},1]$.
  
  Now we obtain for $v$ the representation
  \begin{gather}\label{eq:representation}
       v=\sum_{i=1}^{N-1}(v_i\phi_i+w_i\psi_{3,i})+\sum_{i=1}^Ny_j\chi_{2,j}+w_n\psi_{3,n}.
  \end{gather}
  
  The functions $\phi_i$, $\psi_{3,i}$ and $\chi_{2,j}$ are all symmetric w.r.t.
  their domain of support, with only a few exceptions.
  The last function $\psi_{3,N}$ is antisymmetric on $[x_{N-1},1]$, and
  $\phi_{N/2}$ and $\psi_{3,N/2}$ are in general not symmetric on a
  Shishkin mesh, as here two intervals with different sizes meet.
    
  For a unique representation we still have to define the coefficients in \eqref{eq:representation}.
  We use the following degrees of freedom
  \begin{subequations}
     \begin{align}
       N_1^iv&:=v(x_i),\,i=1,\dots,N-1,\\
       N_2^jv&:=\frac{\int_{x_{j-1}}^{x_j} L_2^j(x) v(x)dx}
                     {\int_{x_{j-1}}^{x_j} L_2^j(x) \chi_{2,j}(x)dx},\,j=1,\dots,N,\\
       N_3^iv&:=\frac{\int_{x_{i-1}}^{x_i} L_3^i(x) v(x)dx}
                     {\int_{x_{i-1}}^{x_i} L_3^i(x) \psi_{3,i}(x)dx},\,i=1,\dots,N
     \end{align}
  \end{subequations}
  where $L_k^i$ is the $k$-th Legendre polynomial $L_k$ mapped onto $[x_{i-1},x_i]$. Then it follows
  \begin{align*}
       v_i &= N_1^iv,&
       y_j &= N_2^jv,&
       w_i &= \int_0^{x_i}\widetilde L_3 v
  \end{align*}
  where
  \[
      \widetilde L_3\big|_{x_{k-1}}^{x_k}=\frac{L_3^k}
                                          {\int_{x_{k-1}}^{x_k} L_3^k(x) \psi_{3,k}(x)dx}.
  \]
  
  With the representation ~\eqref{eq:representation} we can write the $L_2$-norm of $v$ as
  \begin{align*}
     \norm{v}{0}^2
      &= \bignorm{\sum_{i=1}^{N-1}(v_i\phi_i+w_i\psi_{3,i})}{0}^2+
         \bignorm{\sum_{j=1}^Ny_j\chi_{2,j}}{0}^2
         +\norm{w_N\psi_{3,N}}{0}^2\\
      &\quad
         +2\left(\sum_{i=1}^{N-1}v_i\phi_i,\sum_{j=1}^Ny_j\chi_{2,j}\right)
         +2\left(\sum_{i=1}^{N-1}v_i\phi_i,w_N\psi_{3,N}\right).
  \end{align*}
  All other scalar products involve the even functions $\chi_{2,j}$ and the
  functions $\psi_{3,i}$ that are either zero or odd on the support of $\chi_{2,j}$.
  Thus, those scalar products are zero.
  The two remaining scalar products can be
  rewritten as
  \begin{align*}
      \left(\sum_{i=1}^{N-1}v_i\phi_i,\sum_{j=1}^Ny_j\chi_{2,j}\right)
      &=  \sum_{i=1}^{N-1}v_i\left[y_i\int_{x_{i-1}}^{x_i}\phi_i\chi_{2,i}+y_{i+1}\int_{x_i}^{x_{i+1}}\phi_i\chi_{2,i+1}\right]
       = \frac{1}{4}\sum_{i=1}^{N-1}v_i\left[h_iy_i+h_{i+1}y_{i+1}\right]\\
      \left(\sum_{i=1}^{N-1}v_i\phi_i,w_N\psi_{3,N}\right)
      &= v_{N-1}w_N\int_{x_{N-1}}^{x_N}\phi_{N-1}\psi_{3,N}
       = \frac{1}{12}v_{N-1}w_N h_N.
  \end{align*}    

  \begin{lem}\label{lem:4_1d}
   Let $p=3$ and consider the one-dimensional case. Then we obtain for the convective term in the
   smooth part $S$
   \begin{gather}\label{eq:analysis:5}
     \left|\int_0^1 b(S-\hat S)'v\right|
%        \lesssim N^{-(3+1/2)}\norm{v}{0}+N^{-4}(\ln N)^{1/2}\enorm{v}.
       \lesssim N^{-(3+1/4)}\enorm{v}.
   \end{gather}
  \end{lem}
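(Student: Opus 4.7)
My plan is to use the representation~\eqref{eq:representation} of the test function $v\in V^N$ and, taking $\hat S = I^N S$ (so that $S - \hat S$ vanishes at every mesh vertex), to split
\[
 \int_0^1 b(S - \hat S)'\,v\,\mathrm{d}x \;=\; T_\phi + T_\chi + T_\psi + T_{\psi,N}
\]
into contributions from the piecewise-linear hats $\phi_i$, the quadratic bubbles $\chi_{2,j}$, the interior cubic bubbles $\psi_{3,i}$ and the antisymmetric boundary piece $\psi_{3,N}$, and to estimate each sum separately. Throughout, the key structural facts I will exploit are that the three bubble-type basis functions vanish at the endpoints of their supports, while each hat $\phi_i$ is symmetric about its central node whenever the two adjacent mesh intervals have equal length.

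\textbf{Bubble contributions $T_\chi$ and $T_\psi$.}
On the single-element support of $\chi_{2,j}$, and on each of the two half-supports of an interior $\psi_{3,i}$, I would integrate by parts to move the derivative off $S - I^N S$. The resulting integrands have the form $(S - I^N S)(b\beta)'$ with $\beta$ a bubble. Since the Gauss--Lobatto quadrature is exact on $\PS_{2p-1}$ and $I^N S$ coincides with $S$ at all nodes, $S - I^N S$ is effectively $L_2$-orthogonal on each element to polynomials of degree $\le p-1$; subtracting the best such polynomial from $(b\beta)'$ and combining with $|S^{(p+1)}|\le C$ from Assumption~\ref{ass:dec} and the anisotropic bounds~\eqref{eq:inter:aniso} produces elementwise contributions of order $h^{p+1}$. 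Summing and using that $|y_j|$ and $|w_i|$ are controlled by $\|v\|_0$ then yields $T_\chi,\,T_\psi \lesssim N^{-(p+1)}\enorm{v}$.

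\textbf{Piecewise-linear contribution and the transition.}
For $i\neq N/2$ the hat $\phi_i$ is symmetric about $x_i$ on its two-element support. Integrating by parts on each half and using that the leading behaviour of $S - I^N S$ is itself symmetric on each element (as $I^N$ employs a symmetric Gauss--Lobatto node set on a symmetric reference element) produces a two-element cancellation that saves a full order beyond the naive bound, again delivering $O(N^{-(p+1)}\enorm{v})$ for this part of the sum. The exceptional $\phi_{N/2}$ at the mesh transition, together with the likewise non-symmetric $\psi_{3,N/2}$, must be treated directly: combining the anisotropic bound~\eqref{eq:inter:aniso} for $S - I^N S$ with the choice $\sigma\ge p+3/2$ and a Cauchy--Schwarz balance between the $\|v\|_0$ and $\eps^{1/2}\|v'\|_0$ components of $\enorm{v}$ gives exactly the order $N^{-(p+1/4)}$. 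The antisymmetric boundary piece $T_{\psi,N}$ is dispatched by a further integration by parts that shifts the derivative off $S - I^N S$.

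\textbf{Main obstacle.}
The crux of the argument lies at the mesh-transition index $i = N/2$, where both $\phi_{N/2}$ and $\psi_{3,N/2}$ lose their symmetry and no bubble-type orthogonality is available. No device of the kind used in the bubble case recovers the full $N^{-(p+1)}$ order on this single pair of elements; balancing the contribution there against the two parts of $\enorm{v}$ is precisely what produces the fractional $1/4$ loss in the final rate, and is what dictates the $N^{-(p+1/4)}$ bound stated in the lemma.
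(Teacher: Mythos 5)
Your overall skeleton (the representation \eqref{eq:representation}, symmetry of the Gau\ss--Lobatto error, separate treatment of the exceptional basis functions) matches the paper, but the proposal has two genuine gaps. First, the orthogonality you invoke is not available: $S-I^NS$ is \emph{not} elementwise $L_2$-orthogonal to $\PS_{p-1}$; at best its \emph{leading} term is proportional to the nodal polynomial $(1-t^2)L_p'(t)\propto L_{p+1}-L_{p-1}$, and the paper instead works with the frozen-coefficient approximation $\tilde\eta_i=\frac{S^{(4)}(x_i)}{4!}(x-x_{i-1})(x-\alpha_i)(x-\beta_i)(x-x_i)$ and the splitting $b\eta'=b_i\tilde\eta_i'+(b-b_i)\eta'+b_i(\eta-\tilde\eta_i)'$, exploiting only that $\tilde\eta_i'$ is \emph{odd} against the \emph{even} functions $\phi_i,\psi_{3,i},\chi_{2,i}$. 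Second, and related, your claim that the bubble and symmetric-hat sums come out as $N^{-(p+1)}\enorm{v}$ ``using that $|y_j|$ and $|w_i|$ are controlled by $\norm{v}{0}$'' is unsubstantiated: the coefficient bounds are the delicate part (the paper needs $|h_iy_i|\lesssim\norm{v}{L_1}$, inverse inequalities, and the explicit evaluation of the non-vanishing cross terms in $\norm{v}{0}^2$), and even then these sums only yield $N^{-(4-1/4)}\enorm{v}$, not $N^{-4}\enorm{v}$.

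More importantly, you have misidentified where the $1/4$ is lost. The mesh-transition pair $\phi_{N/2},\psi_{3,N/2}$ is in fact harmless: since $v_{N/2}=\int_0^{\lambda_x}v'$ and $w_{N/2}=\int_0^{\lambda_x}\widetilde L_3v$ live on the thin layer region of width $\lambda_x\lesssim\eps\ln N$, a Cauchy--Schwarz argument gives $|v_{N/2}|+|w_{N/2}|\lesssim(\ln N)^{1/2}\enorm{v}$ (up to $\eps^{1/2}$ absorbed in the energy norm), and this term is of order $N^{-4}(\ln N)^{1/2}\enorm{v}$. The rate-limiting term is the one you claim is ``dispatched by a further integration by parts'': the antisymmetric boundary function $\psi_{3,N}$ on $[x_{N-1},1]$. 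After integrating by parts one is left with $\norm{w_N\psi_{3,N}'}{L_1(x_{N-1},1)}$, which must be bounded through an inverse inequality and the $L_2$-representation of $v$; this costs $N^{1/2}\cdot N^{1/4}$ and produces exactly the dominant $N^{-(4-3/4)}\enorm{v}=N^{-(3+1/4)}\enorm{v}$ contribution of the lemma. As written, your argument neither supplies this estimate nor explains why the boundary term does not spoil the claimed $N^{-(p+1)}$ behaviour of the other sums, so the stated bound is not yet proved.
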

  \begin{proof}
    Let $\{x_i\}$ be a Shishkin mesh on $[0,1]$, i.e.
    \begin{align*}
      x_i&:=\begin{cases}
             \frac{\sigma\eps}{\beta_1}\ln N\frac{2i}{N},
                                             &i=0,\dots,N/2,\\
             1-2(1-\lambda_x)(1-\frac{i}{N}),&i=N/2,\dots,N
            \end{cases}
    \end{align*}
    and $h_i=x_{i}-x_{i-1}$ the local mesh size.
    We have to estimate
    \begin{gather}\label{eq:1d}
      \int_0^1b(S-\hat S)' v,
    \end{gather}
    where $v$ is piecewise polynomial of degree $p=3$ and $\hat S$ some Lagrange
    interpolant of $S$ with $\hat S\in H_0^1(0,1)$. Later we will see that the
    estimates require some properties of the interior interpolation points that
    are fulfilled e.g. for the Gauß-Lobatto interpolation operator.

    Now, using \eqref{eq:representation} and setting $\eta=S-\hat S$ we can rewrite \eqref{eq:1d} as
    \begin{gather}\label{eq:1d'}
      \int_0^1 b(S-\hat S)'v
       = \sum_{i=1}^{N-1}\int_{x_{i-1}}^{x_{i+1}}b\eta'(v_i\phi_i+w_i\psi_{3,i})+
         \sum_{j=1}^N\int_{x_{j-1}}^{x_j}y_jb\eta'\chi_{2,j}
         +\int_{x_{N-1}}^1w_Nb\eta'\psi_{3,N}.
    \end{gather}
    In the two sums we will replace $b\eta'$ by
    \[
      b\eta'=b_i\tilde\eta_i'+(b-b_i)\eta'+b_i(\eta-\tilde\eta_i)'
    \]
    with constant $b_i=b(x_i)$ and $\tilde\eta_i$ defined in such a way that
    \begin{itemize}
      \item $\int\limits_{x_{i-1}}^{x_{i+1}}\tilde\eta_i'\phi_i=0$,\quad
            $\int\limits_{x_{i-1}}^{x_{i+1}}\tilde\eta_i'\psi_{3,i}=0$,\quad
            for $i\in\{1,\dots,N-1\}\setminus\{N/2\}$,
      \item $\int\limits_{x_{i-1}}^{x_{i}}\tilde\eta_i'\chi_{2,i}=0$ for $i=1,\dots,N$,
      \item $\norm{(\eta-\tilde\eta_i)'}{L_\infty(x_{i-1},x_{i+1})}$ is of order 4 in $h_i+h_{i+1}$
            (compared to $\norm{\eta'}{L_\infty(x_{i-1},x_{i+1})}$ being of order 3).
    \end{itemize}
    We will now show, that such an $\tilde\eta_i$ exists.
    It is well known that the interpolation error $S-\hat S=\eta$ can be represented as
    \[
     (S-\hat S)(x)=\frac{S^{(4)}(\xi(x))}{4!}(x-x_{i-1})(x-\alpha_i)(x-\beta_i)(x-x_i)
    \]
    if interpolated in $x_{i-1},\,\alpha_i,\,\beta_i$ and $x_i$, where $\alpha_i$ and $\beta_i$
    are the interior interpolation points. Consequently,
    \begin{gather}\label{eq:tilde_eta}
      (S-\hat S)(x)=\frac{S^{(4)}(x_i)}{4!}(x-x_{i-1})(x-\alpha_i)(x-\beta_i)(x-x_i)+\ord{h_i^5}
    \end{gather}
    on $[x_{i-1},x_i]$. Thus we set set
    \[
      \tilde\eta_i=\begin{cases}
                    \frac{S^{(4)}(x_i)}{4!}(x-x_{i-1})(x-\alpha_i    )(x-\beta_i    )(x-x_i    ),&x\in[x_{i-1},x_i],\\
                    \frac{S^{(4)}(x_i)}{4!}(x-x_{i  })(x-\alpha_{i+1})(x-\beta_{i+1})(x-x_{i+1}),&x\in[x_i,x_{i+1}].
                   \end{cases}
    \]
    By the choice of the symmetric interior interpolation points of the Gauß-Lobatto
    interpolation, our approximation $\tilde\eta_i$ is an even function on the three intervals 
    $[x_{i-1},x_{i+1}]$, $[x_{i-1},x_i]$ and $[x_i,x_{i+1}]$. 
    Therefore, $\tilde\eta_i'$ is an odd function on these intervals.
    Together with $\phi_i$ and $\psi_{3,i}$ being even on $[x_{i-1},x_{i+1}]$ for
    $i\in\{1,\dots,N-1\}\setminus\{N/2\}$ and $\chi_{2,i}$ being
    even on $[x_{i-1},x_i]$ for any $i$, we obtain the first two wanted properties.
    The last property is due to \eqref{eq:tilde_eta}.
    
    Thus \eqref{eq:1d'} can be rewritten as
    \begin{align}\label{eq:1d''}
      \int_0^1 b(S-\hat S)'v
       = \quad &\int_{x_{N/2-1}}^{x_{N/2+1}}b_{N/2}\tilde\eta_i'(v_{N/2}\phi_{N/2}+w_{N/2}\psi_{3,N/2})\notag\\
         +&\sum_{i=1}^{N-1}\int_{x_{i-1}}^{x_{i+1}}[(b-b_i)\eta'+b_i(\eta-\tilde\eta_i)'](v_i\phi_i+w_i\psi_{3,i})\notag \\
         +&\sum_{j=1}^N\int_{x_{j-1}}^{x_j}[(b-b_j)\eta'+b_j(\eta-\tilde\eta_j)']y_j\chi_{2,j}\notag\\
         +&\int_{x_{N-1}}^1w_Nb\eta'\psi_{3,N}
         =:I+II+III+IV.
    \end{align}
%     
%     Let us analyse the four terms on the right-hand side of \eqref{eq:1d''} separately.

    \textbf{I}: For the first term of \eqref{eq:1d''} we obtain
    \[
      |I|=\left|\int_{x_{N/2-1}}^{x_{N/2+1}}b_{N/2}\tilde\eta_i'(v_{N/2}\phi_{N/2}+w_{N/2}\psi_{3,N/2})\right|
      \lesssim N^{-3}(x_{N/2+1}-x_{N/2-1})(|v_{N/2}|+|w_{N/2}|).
    \]
    A Cauchy-Schwarz inequality gives
    \begin{gather}\label{eq:v_N2:estimate}
      v_{N/2}=\int_0^{\lambda_x}v'
%              \lesssim (\eps\ln N)^{1/2}\norm{v'}{0}.
             \lesssim \norm{v'}{L_1(0,\lambda_x)}.
    \end{gather}
    For $w_{N/2}$ we recall
    \[
       w_{N/2} = \int_0^{\lambda_x}\widetilde L_3 v
               = \sum_{k=1}^{N/2}\frac{\int_{x_{k-1}}^{x_k}L_3^k(x)v(x)dx}
                                       {\int_{x_{k-1}}^{x_k} L_3^k(x) \psi_{3,k}(x)dx}.
    \]
    With $L_3^k$ being odd on $[x_{k-1},x_k]$ it holds
    \begin{align*}
      \int_{x_{k-1}}^{x_k}L_3^k(x)v(x)dx
      &=\int_{x_{k-1}}^{x_k}L_3^k(x)\frac{v(x)-v(x_k-(x-x_{k-1}))}{2}dx\\
      &=\frac{1}{2}\int_{x_{k-1}}^{x_k}L_3^k(x)\int_{x_k-(x-x_{k-1})}^xv'(t)dtdx\\
      &\leq \frac{1}{2}\norm{L_3^k}{L_1[x_{k-1},x_k]}\norm{v'}{L_1[x_{k-1},x_k]}.
    \end{align*}
    Thus we have for $w_{N/2}$
    \begin{gather}\label{eq:w_N2:estimate}
       |w_{N/2}|\lesssim \sum_{k=1}^{N/2}\frac{\norm{L_3^k}{L_1[x_{k-1},x_k]}}
                                              {\norm{L_3^k}{0,[x_{k-1},x_k]}^2}\norm{v'}{L_1[x_{k-1},x_k]}
              \lesssim\norm{v'}{L_1[0,\lambda_x]}.
%               \lesssim(\eps\ln N)^{1/2}\norm{v'}{0}.
    \end{gather}
    Combining the estimates for the two coefficients yields
    \begin{gather}\label{eq:rhs:1}
      |I|%\int_{x_{N/2-1}}^{x_{N/2+1}}\tilde\eta_i'(v_{N/2}\phi_{N/2}+w_{N/2}\psi_{N/2})
        \lesssim N^{-4}\norm{v'}{L_1(0,\lambda_x)}
        \lesssim N^{-4}(\eps\ln N)^{1/2}\norm{v'}{0}
        \lesssim N^{-4}(\ln N)^{1/2}\enorm{v}.
    \end{gather}

    \textbf{II+III}: It holds with the interpolation properties of
    $b-b_i$, $\eta'$ and $(\eta-\tilde\eta_i)'$
    \begin{align*}
     (II+III)^2
      &\leq 2(II^2+III^2)
       \lesssim N^{-8}\left[ \norm{v}{0}^2
                            -\frac{1}{2}\sum_{i=1}^{N-1}v_i\left[h_iy_i+h_{i+1}y_{i+1}\right]
                            -\frac{1}{6}v_{N-1}w_Nh_N\right].%\int_{x_{N-1}}^{x_N}\phi_{N-1}\psi_{3,N}\right].
    \end{align*}
    The coefficients $v_i$, $y_i$ and $w_N$ can be bound by
    \begin{align*}
     |h_iy_i| &= |h_i N_2^iv|  = h_i\left|\frac{\int_{x_{i-1}}^{x_i}L_2^i v}{\int_{x_{i-1}}^{x_i}L_2^i \chi_{2,i}}\right|
               \lesssim \norm{v}{L_1(x_{i-1},x_i)}\\
     |w_N| &\leq |w_{N/2}|+\norm{v'}{L_1(\lambda_x,1)}
            \lesssim (\ln N)^{1/2}\enorm{v}+N\norm{v}{0}\\
     |v_i| &\lesssim\begin{cases}
                     (\ln N)^{1/2}\enorm{v},&i\leq N/2\\
                     N\norm{v}{L_1(x_{i-1},x_{i+1})},&j>N/2
                    \end{cases}
    \end{align*}
    where we have used \eqref{eq:w_N2:estimate} and an inverse inequality in the second line, 
    and a similar reasoning to \eqref{eq:v_N2:estimate} and an inverse inequality in the last line.
    Thus, we obtain
    \begin{align}
     (II+III)^2
      &\lesssim N^{-8}\bigg[ \norm{v}{0}^2
                            +(\ln N)^{1/2}\enorm{v}\norm{v}{L_1(0,x_{N/2+1})}
                            +\sum_{i=N/2+1}^{N-1}N\norm{v}{L_1(x_{i-1},x_{i+1})}^2\notag\\&\hspace*{2cm}
                            +\norm{v}{L_1(x_{N-2},x_{N})}((\ln N)^{1/2}\enorm{v}+N\norm{v}{0})\bigg]\notag\\
      &\lesssim N^{-8}\bigg[% \norm{v}{0}^2
                             (\ln N)^{1/2}\enorm{v}^2
%                             +\sum_{i=N/2+1}^{N-1}\norm{v}{0,(x_{i-1},x_{i+1})}^2\\&\hspace*{2cm}
%                             +N^{-1/2}\norm{v}{0,(x_{N-2},x_{N})}((\ln N)^{1/2}\enorm{v}+N\norm{v}{0})\bigg].
                            +N^{1/2}\norm{v}{0}^2\bigg]
       \lesssim N^{-(8-1/2)}\enorm{v}^2.\label{eq:rhs:2h}
    \end{align}
    Therefore, we can conclude
    \begin{gather}\label{eq:rhs:2}
      |II+III|
        \lesssim N^{-(4-1/4)}\enorm{v}.
    \end{gather}
    
    \textbf{IV}: Finally, integration by parts, the bound on $|w_N|$ and the interpolation properties of $\eta$ give
    \begin{gather*}
      IV%=\int_{x_{N-1}}^1w_N b\eta'\psi_N
        =-\int_{x_{N-1}}^1w_N b'\eta\psi_{3,N}
         -\int_{x_{N-1}}^1w_N b \eta\psi_{3,N}'
        \lesssim N^{-4}(\enorm{v}+\norm{w_N \psi_{3,N}'}{L_1(x_{N-1},1)}).
%         \lesssim N^{-(3+1/2)}\norm{v}{0}.
    \end{gather*}
    For $\norm{w_N \psi_{3,N}'}{L_1(x_{N-1},1)}$ an inverse inequality gives
    \begin{align*}
     \norm{w_N \psi_{3,N}'}{L_1(x_{N-1},1)}
     &\lesssim N\norm{w_N \psi_{3,N}}{L_1(x_{N-1},1)}
      \lesssim N^{1/2}\norm{w_N \psi_{3,N}}{0,(x_{N-1},1)}\\
     &\lesssim N^{1/2}\left(\norm{v}{0}^2
                          -\frac{1}{2}\sum_{i=1}^{N-1}v_i\left[h_iy_i+h_{i+1}y_{i+1}\right]
                          -\frac{1}{6}v_{N-1}w_Nh_N\right)^{1/2}\\
     &\lesssim N^{1/2}N^{1/4}\enorm{v},
    \end{align*}
    where the estimation of the scalar products in \eqref{eq:rhs:2h} was used.
    Together we obtain
    \begin{gather}\label{eq:rhs:4}
      |IV|\lesssim N^{-(4-3/4)}\enorm{v}
    \end{gather}         
    Combining \eqref{eq:rhs:1}, \eqref{eq:rhs:2} and \eqref{eq:rhs:4} finishes the proof.
  \end{proof}

  \begin{lem}\label{lem:4_2d}
    It holds
    \begin{gather}\label{eq:analysis:7}
      |(b\cdot\grad(S-\hat S),v)|
%       \lesssim N^{-(p+1/2)}\enorm{v}.
      \lesssim N^{-(p+1/4)}\enorm{v}.
    \end{gather}   
  \end{lem}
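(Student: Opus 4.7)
The plan is to reduce the two-dimensional statement for arbitrary odd $p\geq 3$ to the one-dimensional argument of Lemma~\ref{lem:4_1d} by exploiting the tensor-product structure of the mesh and of the Gau\ss--Lobatto interpolation, and to extend the special basis representation~\eqref{eq:representation} from cubics to polynomial degree $p$ using Legendre polynomials. First I would split
\[
 (b\cdot\grad(S-\hat S),v)=(b_1(S-\hat S)_x,v)+(b_2(S-\hat S)_y,v)
\]
and focus on the first term; the second is symmetric. Since $T^N$ is a tensor-product mesh and $I^N=I^N_x\otimes I^N_y$, for every fixed $y$ the slice $x\mapsto(S-\hat S)(x,y)$ is the one-dimensional Gau\ss--Lobatto interpolation error of $S(\cdot,y)$ on $\{x_i\}$, and $x\mapsto v(x,y)$ is a piecewise polynomial of degree $p$ in $x$ on the same mesh. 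I would apply the one-dimensional estimate slice by slice in $y$ and then use a Cauchy--Schwarz step together with the tensor-product structure of $\enorm{\cdot}$ to assemble the two-dimensional bound.

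For the generalisation from $p=3$ to arbitrary odd $p$, I would replace the three building blocks $\hat\phi,\hat\chi_2,\hat\psi_3$ by the family
\[
 \hat\psi_k(t)=\frac{L_1(2|t|-1)-L_k(2|t|-1)}{2},\quad k\in\{1,3,\dots,p\},
\]
on $[-1,1]$, together with $\hat\chi_k(t)=(1-L_k(2t-1))/2$ on $[0,1]$ for even $k\in\{2,4,\dots,p-1\}$ and a boundary-adjusted variant of $\hat\psi_k$ near $x=1$. Every $\hat\psi_k$ is even on $[-1,1]$ and every $\hat\chi_k$ is even about the midpoint of $[0,1]$, so the orthogonality and cross-term computations preceding Lemma~\ref{lem:4_1d} carry over verbatim: scalar products between even nodal-type functions and odd derivatives vanish, and the representation of $\norm{v}{0}^2$ decouples up to a handful of transition and boundary cross terms of the same shape as $\tfrac{1}{4}v_i(h_iy_i+h_{i+1}y_{i+1})$ and $\tfrac{1}{12}v_{N-1}w_Nh_N$.

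With this basis I would write $b\eta'=b_i\tilde\eta_i'+(b-b_i)\eta'+b_i(\eta-\tilde\eta_i)'$, where $\tilde\eta_i$ is obtained by freezing $S^{(p+1)}(x_i)$ in the Lagrange remainder
\[
 (S-\hat S)(x)=\frac{S^{(p+1)}(\xi(x))}{(p+1)!}\prod_{k=0}^{p}(x-\xi_{i,k})
\]
at the symmetric Gau\ss--Lobatto nodes $\xi_{i,k}$ on $[x_{i-1},x_i]$. Symmetry of these nodes makes $\tilde\eta_i$ even on each relevant subinterval, so $\tilde\eta_i'$ tests to zero against every even basis function, leaving only the single exceptional cell near $x=\lambda_x$. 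The contributions $II+III$ (from the smoothness of $b$ and the $\ord{h_i^{p+2}}$ size of $\eta-\tilde\eta_i$), $I$ (the transition term), and $IV$ (the boundary term) are then bounded exactly as in \eqref{eq:rhs:1}--\eqref{eq:rhs:4} with $N^{-4}$ replaced by $N^{-(p+1)}$; the same Cauchy--Schwarz step applied to $v_{N/2},w_{N/2}$ and $w_N$ costs the familiar $N^{1/4}$ and produces the target exponent $p+1/4$.

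The main obstacle is the two-dimensional bookkeeping at the transition lines $x=\lambda_x$ and $y=\lambda_y$. After slicing in $y$, every row contributes exceptional terms concentrated near $x=\lambda_x$ and $x=1$, and these have to be summed against the $y$-profile of $v$ without losing a factor $N$. I would address this by reusing the $\meas(\Omega_{21})\lesssim\eps\ln N$ argument from Lemma~\ref{lem:3} (and its analogue along $y=\lambda_y$) together with an inverse inequality in the coarse region. A subtler point is that a single $y$-slice of $v$ need not vanish at $x=0,1$, so the 1D coefficient bounds on $v_{N/2},w_{N/2},w_N$ must be reinterpreted as $x$-partial $L^1$ estimates and only afterwards squared and integrated in $y$ to reassemble $\enorm{v}$, with careful tracking of the $(\ln N)^{1/2}$ factors from the narrow layer strip.
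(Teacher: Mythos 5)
Your proposal follows essentially the same route as the paper: the same Legendre-based extension of the special basis (even bubbles $\hat\chi_{2k}$ on single cells, symmetric double-cell functions $\hat\psi_{2k+1}$), the same splitting $b\eta'=b_i\tilde\eta_i'+(b-b_i)\eta'+b_i(\eta-\tilde\eta_i)'$ with $\tilde\eta_i$ obtained by freezing $S^{(p+1)}$ at the symmetric Gau\ss--Lobatto nodes, the same treatment of the exceptional transition and boundary terms, and the reduction of the 2D case to the 1D estimate via the tensor-product structure. If anything, your slicewise discussion of the two-dimensional assembly is more explicit than the paper, which dispatches that step in one sentence.
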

  \begin{proof}
    For any odd polynomial degree $p$ larger than three, we simply extend the approach
    of Lemma~\ref{lem:4_1d}.
    On each interval $[x_{i-1},x_i]$ we add even-order bubble functions $\chi_{2k,i}$,
    $k=2,\dots,(p-1)/2$.
    They are defined on $[0,1]$ by
    \[
      \hat\chi_{2k}(t):=\frac{1-L_{2k}(2t-1)}{2}
    \]
    and mapped linearly onto $[x_{i-1},x_i]$.
    On each double interval $[x_{i-1},x_{i+1}]$ we add piecewise
    polynomial bubble functions $\psi_{2k+1,i}$, $k=2,\dots,(p-1)/2$,
    defined on the reference interval $[-1,1]$ by
    \[
      \hat\psi_{2k+1}(t):=\frac{L_1(2|t|-1)-L_{2k+1}(2|t|-1)}{2}
    \]
    and mapped by $F_i$. Figure~\ref{fig:basis2}
    \begin{figure}
      \begin{center}
       \includegraphics[width=0.32\textwidth]{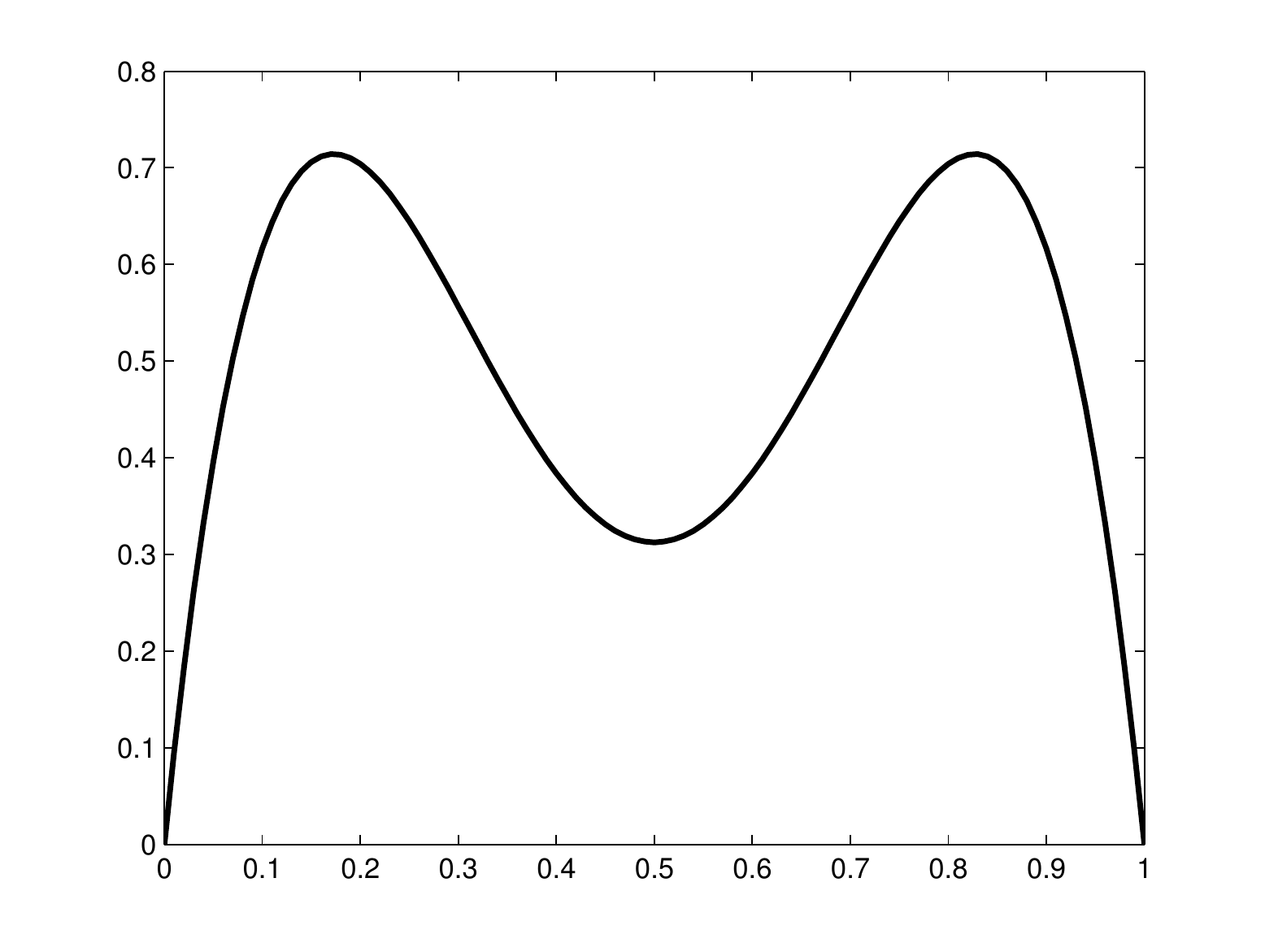}
       \includegraphics[width=0.32\textwidth]{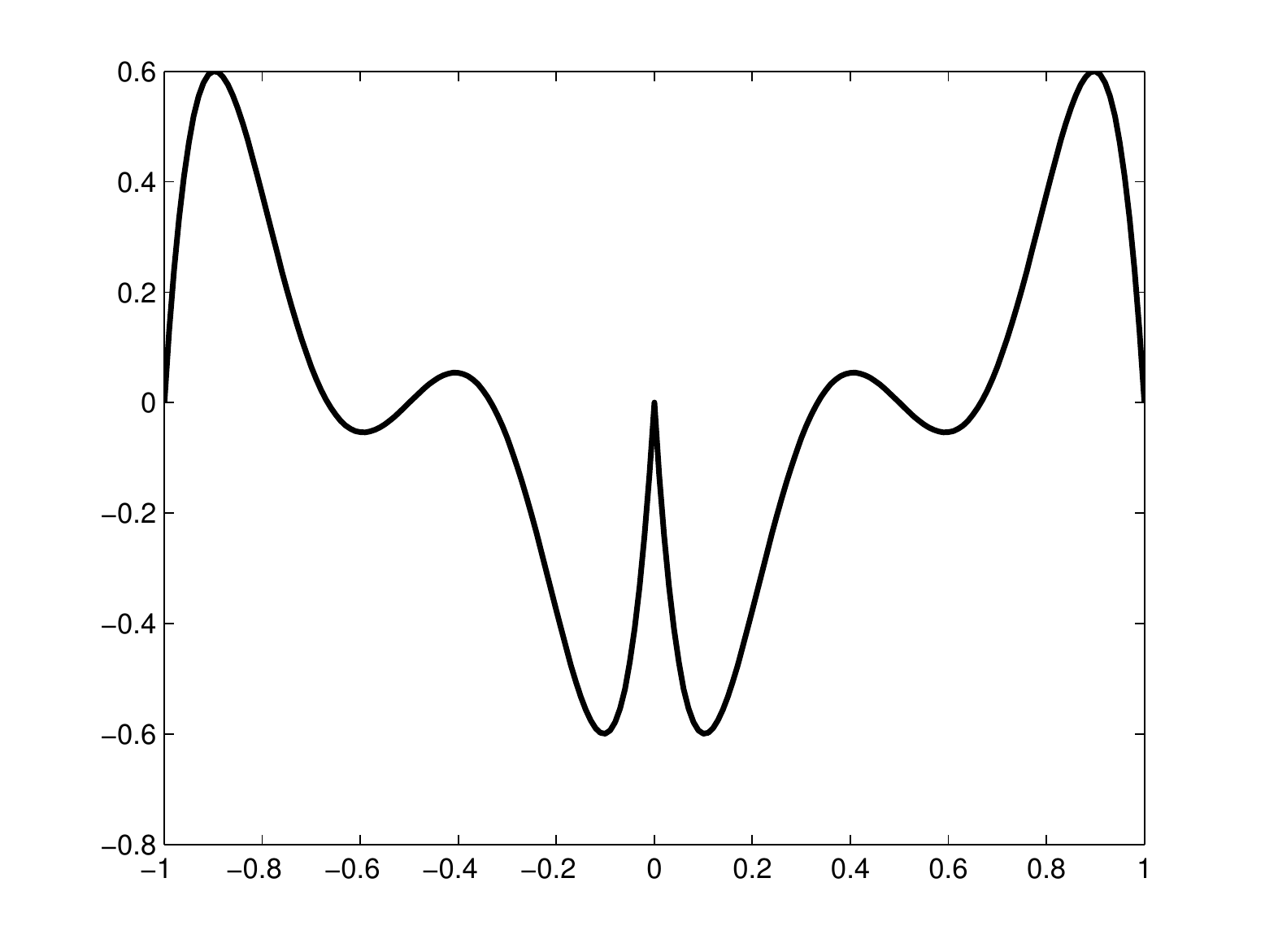}
      \end{center}
      \caption{Additional basis functions $\hat\chi_4$ (left) and $\hat\psi_5$ (right) on their domains of support
               \label{fig:basis2}}
    \end{figure}
    shows in the case of $p=5$ the two additional functions. Thus we obtain the representation
    \[
      v= \sum_{i=1}^{N-1}v_i\phi_i
        +\sum_{k=1}^{(p-1)/2}\sum_{i=1}^{N-1}w_i^{2k+1}\psi_{2k+1,i}
        +\sum_{k=1}^{(p-1)/2}\sum_{i=1}^N y_j^{2k}\chi_{2k,j}
        +\sum_{k=1}^{(p-1)/2}w_n\psi_{2k+1,n}.
    \]
    The new coefficients can be defined by using the degrees of freedom
    \begin{align*}
       N_{2k}^jv  &:=\frac{\int_{x_{j-1}}^{x_j} L_{2k}^j(x) v(x)dx}
                          {\int_{x_{j-1}}^{x_j} L_{2k}^j(x) \chi_{2k,j}(x)dx},    &&\,j=1,\dots,N,\\
       N_{2k+1}^iv&:=\frac{\int_{x_{i-1}}^{x_i} L_{2k+1}^i(x) v(x)dx}
                          {\int_{x_{i-1}}^{x_i} L_{2k+1}^i(x) \psi_{2k+1,i}(x)dx},&&\,i=1,\dots,N.
    \end{align*}
    If we compare the new basis functions with the old ones $\chi_{2,i}$ and $\psi_{3,i}$,
    we notice a very similar behaviour. Thus, the same analytical steps can be applied
    and it follows for the convective term in $S$ and any odd degree $p$
    \begin{gather}\label{eq:analysis:6}
%       \int_0^1 b(S-\hat S)'v\lesssim N^{-(p+1/2)}\norm{v}{0}+N^{-(p+1)}(\ln N)^{1/2}\enorm{v}.
      \int_0^1 b(S-\hat S)'v\lesssim N^{-(p+1/4)}\enorm{v}.
    \end{gather}
  
    The extension to the two-dimensional problem is fairly easy. By the tensor-product structure
    of our problem, the mesh and the definitions of the norms, we obtain
    immediately from \eqref{eq:analysis:6}
    \[
      (b\cdot\grad(S-\hat S),v)
      =(b_1(S-\hat S)_x,v)+
       (b_2(S-\hat S)_y,v)
%       \lesssim N^{-(p+1/2)}\enorm{v}.
      \lesssim N^{-(p+1/4)}\enorm{v}.\qedhere
    \]
  \end{proof}

  Consequently, by combining \eqref{eq:analysis:start} and Lemmas~\ref{lem:1}--\ref{lem:3} and \ref{lem:4_2d}
  we have the main result of this paper.
  
  \begin{thm}
   For the Galerkin solution $u^N$ of a finite element method of odd degree $p$ holds
   \[
%      \enorm{u^N-J^Nu}\lesssim (N^{-1}\ln N)^{p+1}+N^{-(p+1/2)}
     \enorm{u^N-J^Nu}\lesssim (N^{-1}\ln N)^{p+1}+N^{-(p+1/4)}
   \]
   where $J^N$ is either the vertex-edge-cell interpolation operator $\pi^N$ or
   the Gauß-Lobatto interpolation operator $I^N$.
  \end{thm}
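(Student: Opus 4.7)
The plan is to assemble the preceding lemmas into a single estimate for $\enorm{\chi}$, where $\chi := J^Nu - u^N \in V^N$. Starting from the coercivity plus Galerkin orthogonality inequality (3.5), we have
\[
  \enorm{\chi}^2 \lesssim -a_{Gal}(u-J^Nu,\chi),
\]
and the bilinear form decomposes as in (3.6a)--(3.6c) into a diffusion part, a convection part, and a reaction part. It suffices to bound each by something of order $[(N^{-1}\ln N)^{p+1}+N^{-(p+1/4)}]\enorm{\chi}$.

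The reaction term is immediate from Lemma~\ref{lem:1}, which gives a factor $(N^{-1}\ln N)^{p+1}\enorm{\chi}$, and the diffusion term is supplied by Lemma~\ref{lem:2}: for $J^N=\pi^N$ the bound is $N^{-(p+1)}\enorm{\chi}$ and for $J^N=I^N$ it is $(N^{-1}\ln N)^{p+1}\enorm{\chi}$; both are absorbed into the target bound.

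For the convective term the strategy is to split $u$ using Assumption~\ref{ass:dec} as $u=S+E_{12}+E_{21}+E_{22}$ and treat the layer parts and the smooth part separately. Since $\chi\in H_0^1(\Omega)$ we may integrate by parts and work with either form in (3.6b), picking up only an additional $(\mathrm{div}\,b)(u-J^Nu,\chi)$ contribution which is again absorbed by Lemma~\ref{lem:1}. For each of the three layer pieces, Lemma~\ref{lem:3} yields a bound of order $(N^{-1}\ln N)^{p+1}\enorm{\chi}$. For the smooth part $S$, Lemma~\ref{lem:4_2d} gives $N^{-(p+1/4)}\enorm{\chi}$, and this is precisely the term that limits the overall rate.

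Combining these estimates one obtains
\[
  \enorm{\chi}^2 \lesssim \left[(N^{-1}\ln N)^{p+1}+N^{-(p+1/4)}\right]\enorm{\chi},
\]
and division by $\enorm{\chi}$ yields the claim. The only genuinely delicate point is that Lemmas~\ref{lem:3} and~\ref{lem:4_2d} were written with $I^N$ in mind, so for $J^N=\pi^N$ one has to verify that the same rate is achievable. This is handled using the identity $\pi^N_p=I^N_p\pi^N_{p+1}$ recalled in the proof of Lemma~\ref{lem:2}, together with the anisotropic estimate \eqref{eq:inter:aniso:1} which already holds uniformly for both operators; consequently the same bookkeeping as in Lemmas~\ref{lem:3} and~\ref{lem:4_2d} carries over. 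The hard work---the special symmetric basis representation and the cancellation used in Lemma~\ref{lem:4_1d}---has already been done, so the theorem's proof is a short assembly step.
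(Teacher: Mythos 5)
Your proposal is correct and follows essentially the same route as the paper: combine \eqref{eq:analysis:start} with Lemmas~\ref{lem:1}--\ref{lem:3} and~\ref{lem:4_2d} for $I^N$, and reduce the $\pi^N$ case to the $I^N$ case via the identity \eqref{eq:identity} as indicated at the end of the proof of Lemma~\ref{lem:2}. One small caution: for the smooth-part convective term the $\pi^N$ case must go entirely through that identity (estimating the extra contributions from $Ru$ and $u-\pi^N_{p+1}u$ separately), since the symmetry argument of Lemma~\ref{lem:4_1d} is tied to the Gau\ss--Lobatto interpolation points and does not transfer to $\pi^N$ by the anisotropic bounds \eqref{eq:inter:aniso:1} alone.
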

  \begin{proof}
   By combining the previous Lemmas we have the main result for the Gauß-Lobatto
   interpolation operator immediately. For the vertex-edge-cell interpolation operator
   $\pi^N$ we use the identity \eqref{eq:identity} and the ideas presented at the end of
   the proof of Lemma~\ref{lem:2}.
  \end{proof}

  \begin{cor}
   With a suitable postprocessing operator $P^N$ that maps the piecewise
   $\QS_p$-solution into a piecewise $\QS_{p+1}$-solution on a macro-mesh, a superconvergence
   property of the numerical solution $P^Nu^N$
   \[
    \enorm{P^Nu^N-u}\lesssim (N^{-1}\ln N)^{p+1}+N^{-(p+1/4)}
   \]
   can be deduced easily. For details and examples of suitable operators, see e.g. \cite{Fr12}.
  \end{cor}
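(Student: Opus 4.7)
The plan is to prove the corollary by a standard triangle-inequality argument that combines the just-proved supercloseness result for $u^N-J^Nu$ with two black-box properties that the postprocessing operator $P^N$ is required to satisfy: uniform stability in the energy norm, and the fact that $P^N$ composed with the interpolant $J^N$ reproduces polynomials one degree higher than $V^N$ on each macro element. Both of these are verified in \cite{Fr12} for explicit postprocessing operators; here I only need to use them as assumptions encoded in the word ``suitable''.

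The first step is the splitting
\[
  \enorm{P^Nu^N-u}\le \enorm{P^N(u^N-J^Nu)}+\enorm{P^NJ^Nu-u}.
\]
For the first summand I invoke energy-norm stability of $P^N$, which gives
\[
  \enorm{P^N(u^N-J^Nu)}\lesssim \enorm{u^N-J^Nu},
\]
and the main theorem then bounds the right-hand side by $(N^{-1}\ln N)^{p+1}+N^{-(p+1/4)}$.

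For the second summand the crucial property is that, on each macro-element, $P^NJ^N$ is a quasi-interpolant of order $p+2$ into $\QS_{p+1}$. Applying this together with the solution decomposition of Assumption~\ref{ass:dec} and repeating the estimates of Lemma~\ref{lem:inter} at one order higher, I expect to obtain
\[
  \enorm{P^NJ^Nu-u}\lesssim (N^{-1}\ln N)^{p+1},
\]
where the regular part $S$ contributes $N^{-(p+1)}$, the exponential layers $E_{12}$, $E_{21}$ contribute $(N^{-1}\ln N)^{p+1}$ on the fine-mesh strips and an exponentially small term $\eps^{1/2}N^{-\sigma}$ elsewhere, and the corner layer $E_{22}$ is absorbed thanks to $\sigma\ge p+3/2$. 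Summing the two bounds yields the claimed estimate.

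The main obstacle is not analytical at the level of the corollary itself but rather the construction and verification of the postprocessing operator $P^N$: one must exhibit an explicit $P^N:V^N\to \QS_{p+1}$ on a suitable macro-mesh, prove its $\eps$-uniform stability in the energy norm, and prove the macro-element reproduction identity $P^NJ^Nv=v$ for $v\in\QS_{p+1}$. For the operators described in \cite{Fr12} this verification is carried out, so once those facts are cited the corollary indeed follows ``easily'' from the main theorem.
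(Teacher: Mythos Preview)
Your proposal is correct and spells out precisely the standard argument the paper has in mind: the paper gives no proof for the corollary at all, merely asserting that the bound ``can be deduced easily'' and pointing to \cite{Fr12} for the construction of $P^N$ and its properties. Your triangle-inequality splitting together with energy-norm stability of $P^N$ and the order-$(p+2)$ approximation property of $P^NJ^N$ is exactly the mechanism behind that deferred argument, so there is nothing to add.
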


  \bibliographystyle{plain}
  \bibliography{lit}

\end{document}